\title[Higher genus catenoids]{
Higher genus mean curvature $1$ catenoids in hyperbolic and de Sitter 
$3$-spaces}
\date{\today}
\theoremstyle{plain}
 \newtheorem{theorem}{Theorem}[section]
 \newtheorem*{theorem*}{Theorem}
 \newtheorem*{lemma*}{Lemma}
 \newtheorem*{fact*}{Fact}
 \newtheorem{lemma}[theorem]{Lemma}
\theoremstyle{remark}
 \newtheorem{remark}[theorem]{Remark}
 \newtheorem*{remark*}{Remark}
 \newtheorem*{acknowledgements}{Acknowledgements}
\numberwithin{equation}{section}
\newcommand{\R}{\boldsymbol{R}}
\newcommand{\C}{\boldsymbol{C}}
\newcommand{\SL}{\operatorname{SL}}
\newcommand{\SU}{\operatorname{SU}}
\newcommand{\id}{\operatorname{id}}
\newcommand{\B}{\mathcal{B}}
\renewcommand{\Re}{\operatorname{Re}}
\renewcommand{\Im}{\operatorname{Im}}
\renewcommand{\phi}{\varphi}
\renewcommand{\epsilon}{\varepsilon}
\author[S.~Fujimori]{Shoichi Fujimori}
\address[Shoichi Fujimori]{%
   Department of Mathematics, Fukuoka University of Education,
   Munakata, Fukuoka 811-4192, Japan}
\email{fujimori@fukuoka-edu.ac.jp}
\author[W.~Rossman]{Wayne Rossman}
\address[Wayne Rossman]{%
   Department of Mathematics, Faculty of Science,
   Kobe University,
   Rokko, Kobe 657-8501, Japan}
\email{wayne@math.kobe-u.ac.jp}
\subjclass[2000]{Primary 53A10; Secondary 65D17.}
\keywords{constant mean curvature 1 surface, higher genus surface, 
          hyperbolic $3$-space, de Sitter $3$-space}
\begin{document}


\begin{abstract}
We show existence of constant mean curvature 
$1$ surfaces in both hyperbolic $3$-space and de Sitter 
$3$-space with two complete embedded ends and any positive genus 
up to genus twenty.  We also find another such family of surfaces in 
de Sitter $3$-space, but with a different non-embedded end behavior.  
\end{abstract}
\maketitle

\section*{Introduction}

This paper extends the result in \cite{RS} by K. Sato 
and the second author, and also the 
result in \cite{F2} by the first author.  

In \cite{RS}, it was shown that, although the only complete connected 
finite-total-curvature minimal immersions in
$\R^3$ with two embedded ends are catenoids (Schoen 
\cite{S}), there do exist complete 
connected immersed constant mean curvature (CMC) $1$ surfaces with 
two ends in 
hyperbolic $3$-space $H^3$ that are not surfaces of revolution, 
although such non-rotational surfaces in $H^3$ cannot be embedded 
(Levitt and Rosenberg \cite{LR}).  
The examples found in \cite{RS} are of genus one, but there exist examples 
of genus zero as well, called warped catenoid cousins, which we comment 
on later in this introduction.  This 
comparison is of interest, because minimal surfaces in $\R^3$ and 
CMC $1$ surfaces in $H^3$ are Lawson correspondents, and 
therefore have a very close relationship \cite{B}, \cite{UY1}, 
\cite{UY2}, \cite{RUY1}.  

In \cite{F2}, analogous spacelike surfaces in de Sitter $3$-space $S^3_1$ 
were shown to exist.  Likewise, in this non-Riemannian 
situation, there is a 
similar close relationship between spacelike maximal surfaces in Minkowski 
$3$-space $\R_1^3$ and spacelike CMC $1$ surfaces in $S_1^3$.  
The interest in these surfaces stems largely from 
the nature of their singular sets.  

There is a well-known classical Weierstrass representation for 
minimal surfaces in $\R^3$, and a very similar Weierstrass 
type representation for maximal surfaces in $\R^3_1$ (\cite{K}, 
\cite{UY4} for example).  Because of the relationships described above, 
we have again Weierstrass type representations for CMC $1$ surfaces 
in $H^3$ (\cite{B}, \cite{UY1} for example) and for CMC $1$ 
surfaces in $S_1^3$ (\cite{AA}, \cite{F1}, \cite{FRUYY}).  These 
representations are used here, for $H^3$ and $S_1^3$, 
in Equations \eqref{eq:bryant-dual} and \eqref{eq:bryant-dualB}.  
Furthermore, because of all of these relationships, the 
Osserman inequality for minimal surfaces in $\R^3$ has analogs for
maximal surfaces in $\R^3_1$ (\cite{UY4}), and CMC 1 surfaces in $H^3$ 
(\cite{UY3}) and $S_1^3$ (\cite{F1}, \cite{FRUYY}).  

The examples found in \cite{RS} and \cite{F2} 
were only of genus $1$, and the purpose in this article is to 
show:
\begin{enumerate}
\item the method of \cite{RS} can be extended to give examples 
of any positive genus up to genus twenty, and probably any even higher 
genus as well, without requiring a multi-dimensional 
period problem (showing that a simplification of a 
comment made in the introduction of \cite{RS} is possible), and 
\item in light of recent work on CMC $1$ surfaces with singularities in 
$S^3_1$, the same method will give 
CMC $1$ surfaces of any genus (at least up to genus twenty) 
and two embedded ends in $S^3_1$, and 
\item although the CMC $1$ surfaces in $H^3$ and $S_1^3$ have a similar 
mathematical construction, the behavior 
of the ends of the surfaces in 
$S_1^3$ is more complicated to analyze, related to the fact that 
the group $\SU(1,1)$ used in the $S_1^3$ case is not compact 
(although $\SU(2)$, used in $H^3$, is).  To demonstrate this, 
we find a family of surfaces in $S_1^3$ with hyperbolic ends (the term 
``hyperbolic ends'' was defined in \cite{F1} and \cite{FRUYY}).  
\end{enumerate}

CMC $1$ surfaces with certain kinds of singularities in $S^3_1$ were 
called CMC $1$ faces in \cite{F1} and \cite{FRUYY}.  
Regarding the third point above, in 
\cite{F1} and \cite{FRUYY} it was shown that 
ends of CMC $1$ faces in $S_1^3$ come in three types: elliptic, 
hyperbolic and parabolic.  However, ends of CMC $1$ surfaces in 
$H^3$ will always be elliptic.  Because of this, in the $S_1^3$ 
case an extra argument is needed to demonstrate the numerical 
result just below, and we give that argument at the end of this 
paper.  Our main result is this: 

\medskip

\begin{quote}
{\bf Numerical result:} There exists a one-parameter family of CMC $1$ 
genus $k$ complete properly immersed surfaces in $H^3$ with two 
embedded ends, for any positive integer $k \leq 20$.  Likewise, again 
for any positive integer $k \leq 20$, there exist two one-parameter 
families of genus $k$ CMC $1$ 
faces in $S^3_1$, one with two complete embedded elliptic ends, 
and one with two weakly complete (in the sense of \cite{FRUYY}) 
hyperbolic ends.  
\end{quote}

\medskip

We expect the result is true for many integers 
$k \geq 21$ as well, if not all integers $k \geq 21$.  

Note that the surfaces in $H^3$ and the first family of surfaces 
in $S^3_1$ have the nice property that they are embedded outside 
of a compact set.  

Here we are interested in the case that $k$ is positive, but there do 
exist CMC $1$ surfaces with genus $0$ and embedded ends that are not 
surfaces of revolution.  In the $H^3$ case, they can be found in 
\cite{UY1} (Theorem 6.2) and 
\cite{RUY2} (where they are called warped catenoid 
cousins), and those surfaces in $H^3$ imply the existence of 
corresponding non-rotational examples in $S_1^3$ by Theorem 5.6 
in \cite{F1}.  

The surfaces in the above numerical result are not known to exist 
by any rigorous mathematical method, so, 
like in \cite{RS} and \cite{F2}, we rely on numerics at one step to 
show this result.  In particular, 
we show numerically that a certain continuous function from the real 
line to the real line 
is positive at one point and negative at another, thus implying 
by the intermediate value theorem that it has a zero.  

We provide some graphics of higher genus catenoids in $H^3$, 
see Figure \ref{fg:gkcats}.  (Because the surfaces in $S_1^3$ have 
singularities, making them more difficult to visualize globally, 
the computer graphics become less helpful in this case, and we do 
not show such graphics here.)  

We conclude this introduction with two related remarks:
\begin{enumerate}
\item Although there do not exist any genus $1$ complete connected 
finite-total-curvature minimal immersions in $\R^3$ with two 
embedded ends, there do exist genus $1$ maximal surfaces 
(they can actually be complete 
maxfaces in the sense of \cite{UY4}) with two 
embedded ends in $\R_1^3$.  See Kim-Yang \cite{KY}.  
\item If one allows the ends to be non-embedded, there do exist 
examples of complete connected finite-total-curvature 
minimal surfaces with two ends and positive genus.  See 
Fujimori-Shoda \cite{FS} for example.  
\end{enumerate}

\begin{acknowledgements}
The authors are very grateful to Masaaki Umehara, Kotaro Yamada and 
Seong-Deog Yang for many fruitful discussions, without which 
the authors would not have found the result here.  
\end{acknowledgements}

\begin{figure}[htbp] 
\begin{center}
\begin{tabular}{ccc}
 \raisebox{50pt}{$k=1$} & 
 \includegraphics[width=.50\linewidth]{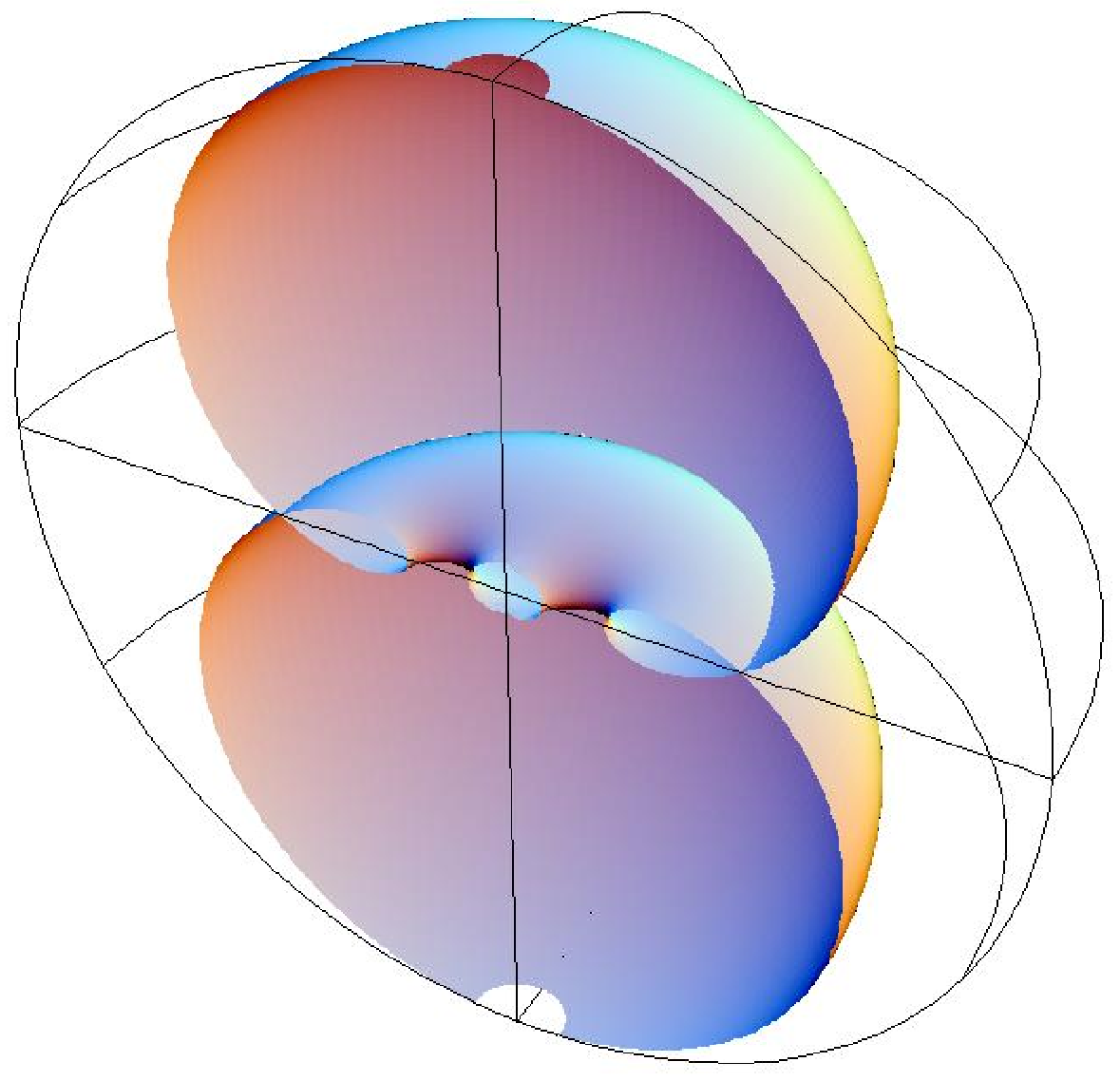} &
 \includegraphics[width=.50\linewidth]{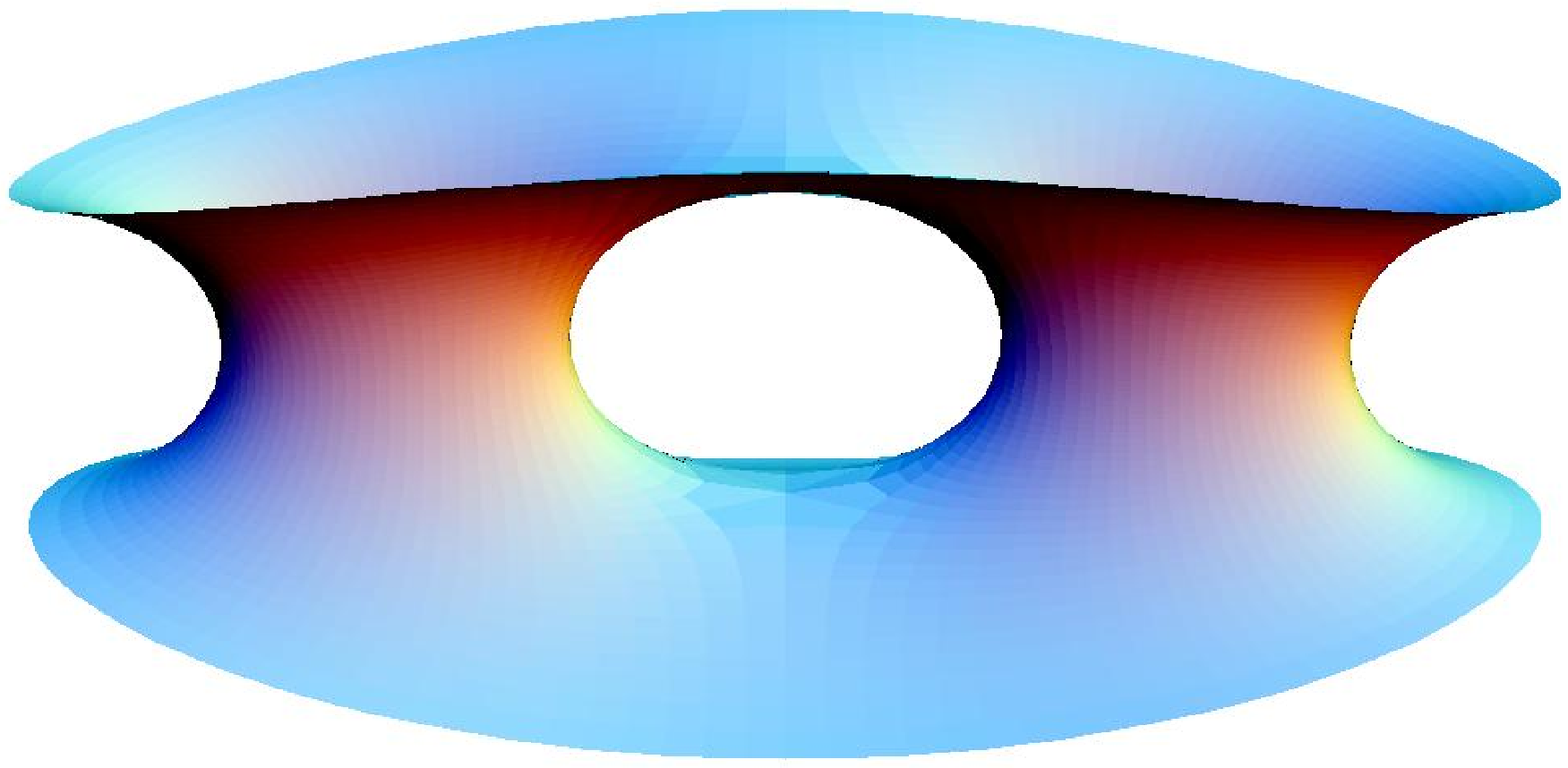} \\
 \raisebox{50pt}{$k=2$} & 
 \includegraphics[width=.50\linewidth]{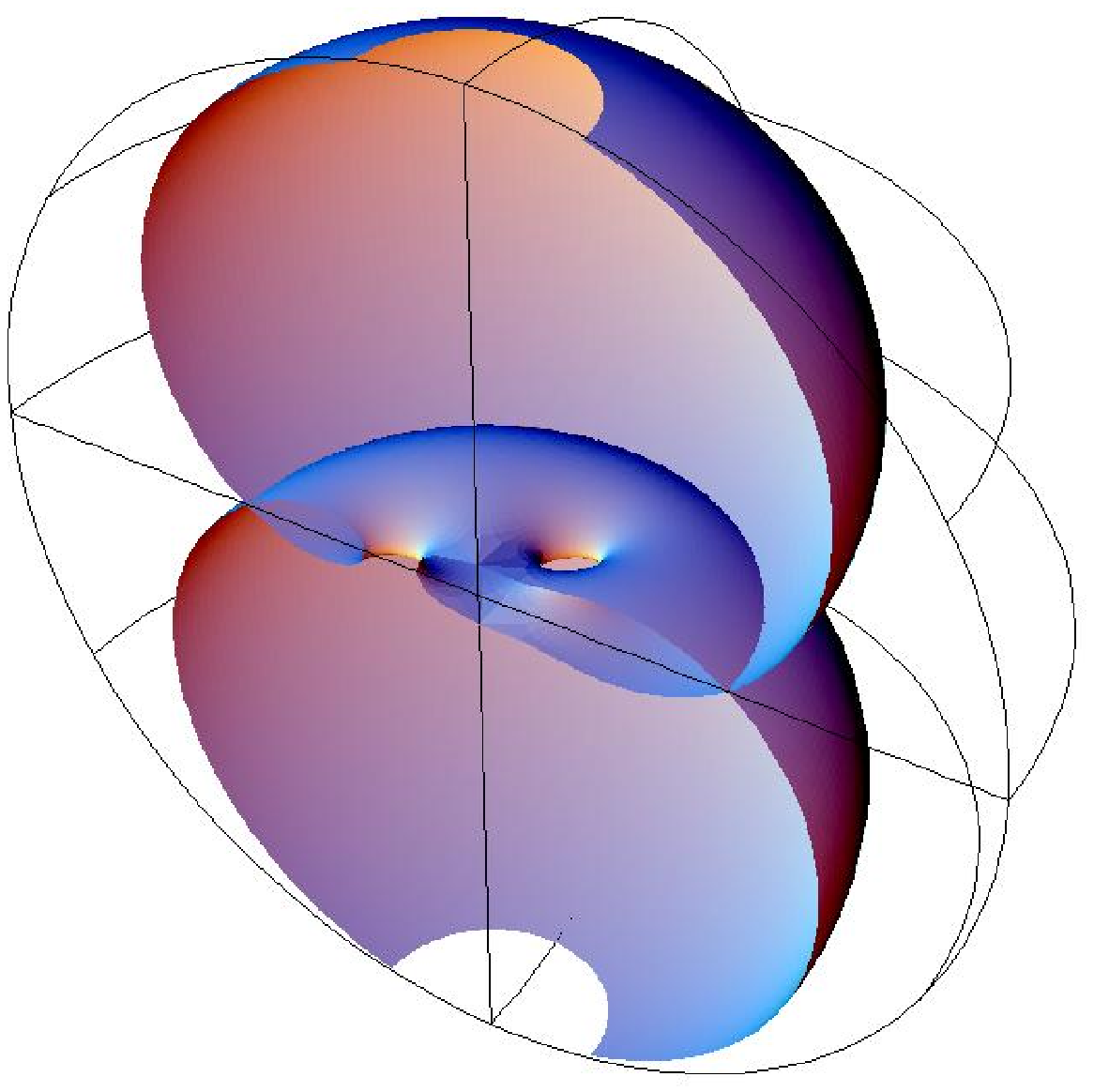} &
 \includegraphics[width=.50\linewidth]{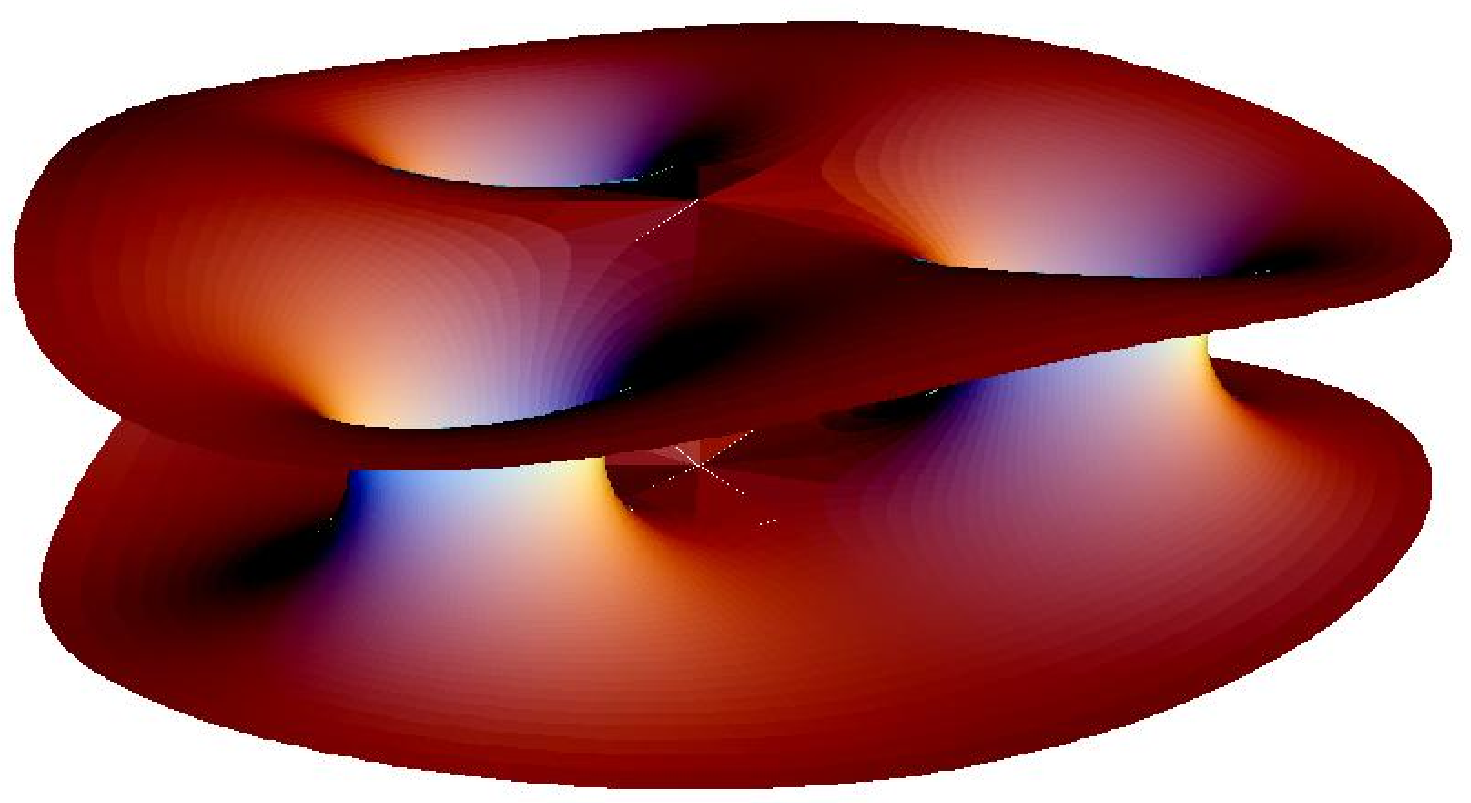} \\
 \raisebox{50pt}{$k=3$} & 
 \includegraphics[width=.50\linewidth]{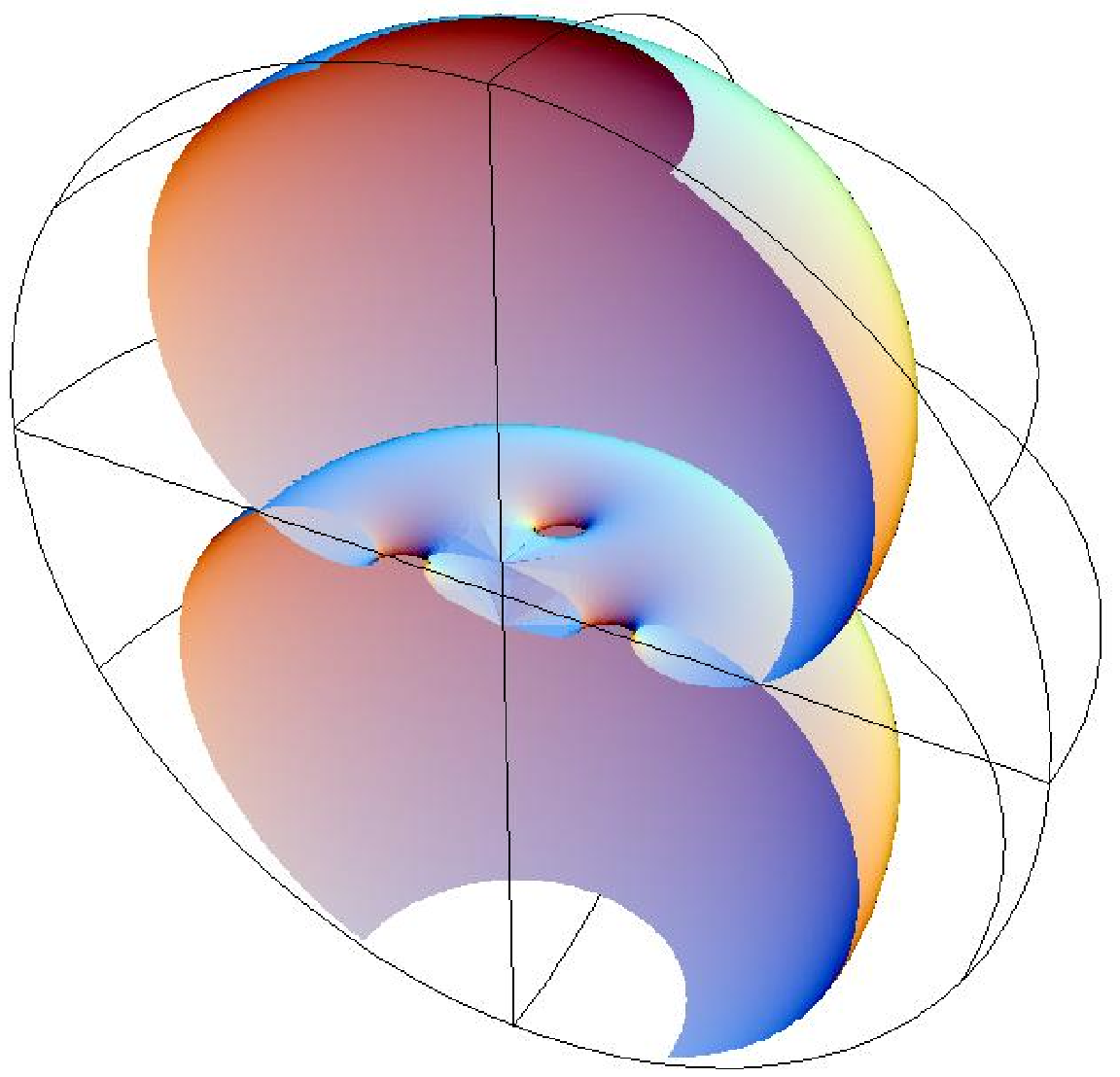} &
 \includegraphics[width=.50\linewidth]{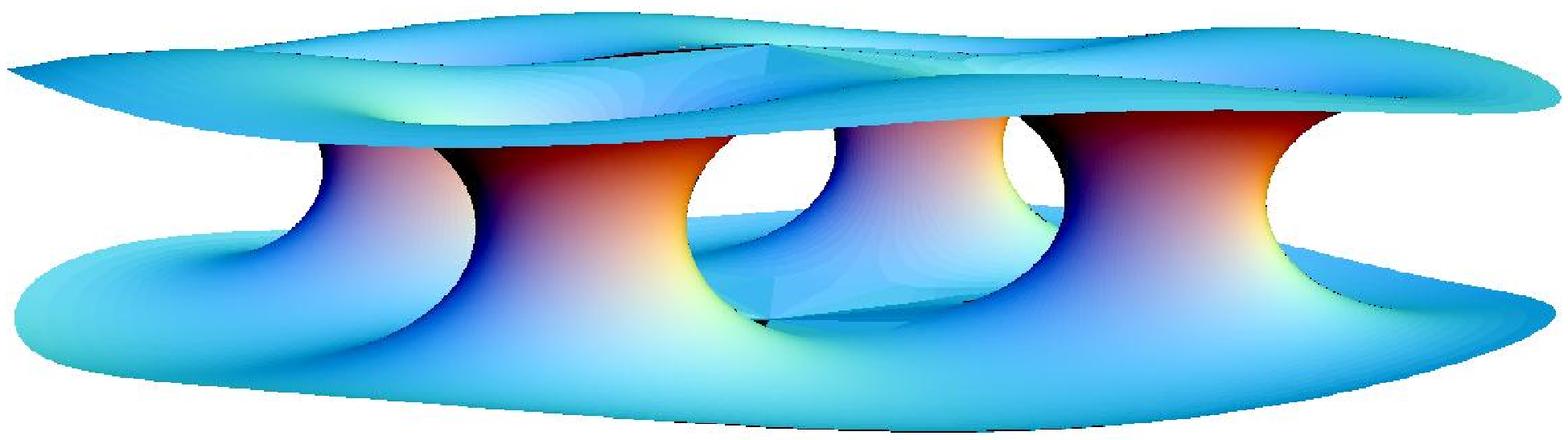} \\
 \raisebox{50pt}{$k=8$} & 
 \includegraphics[width=.40\linewidth]{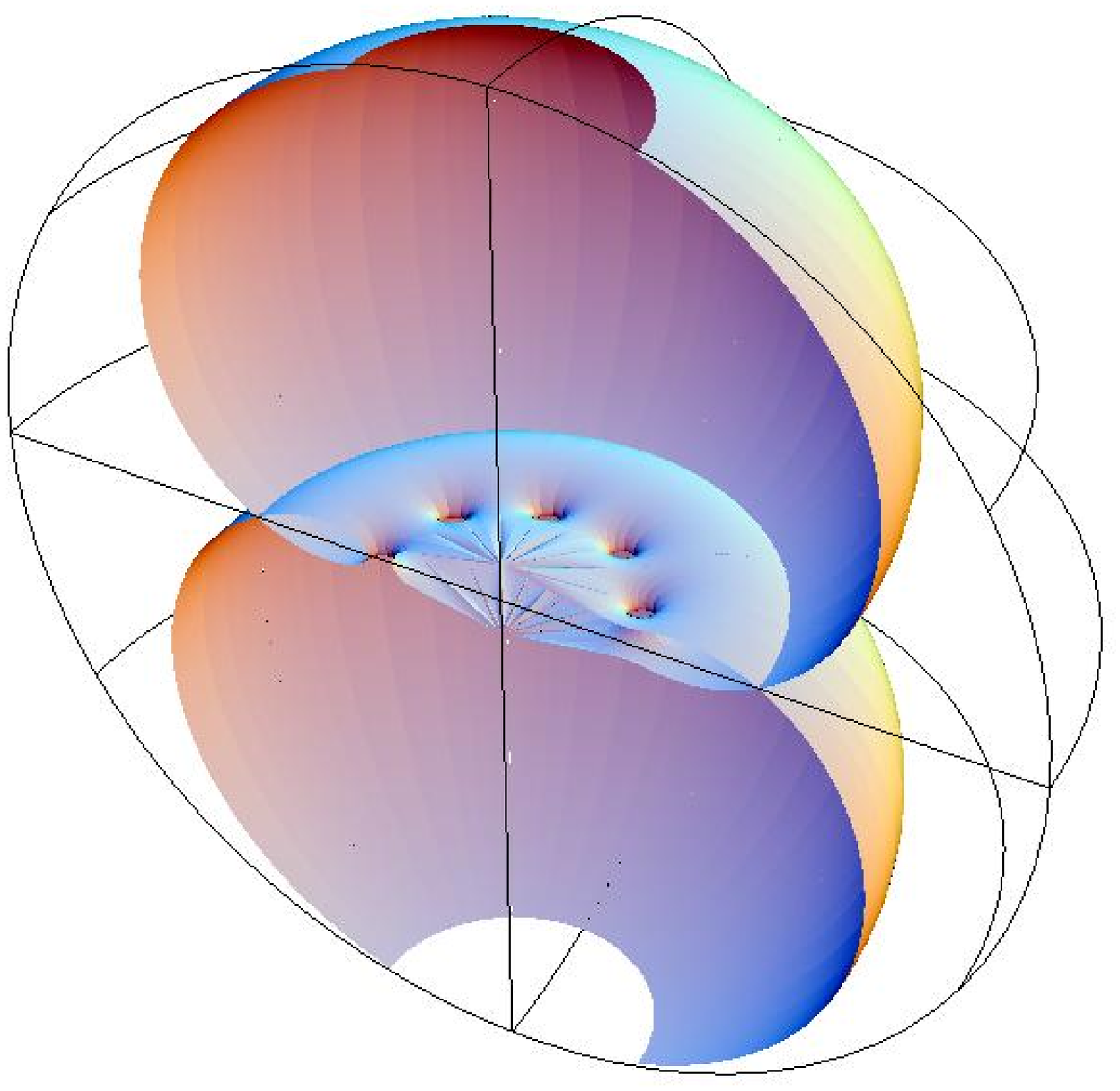} &
 \includegraphics[width=.50\linewidth]{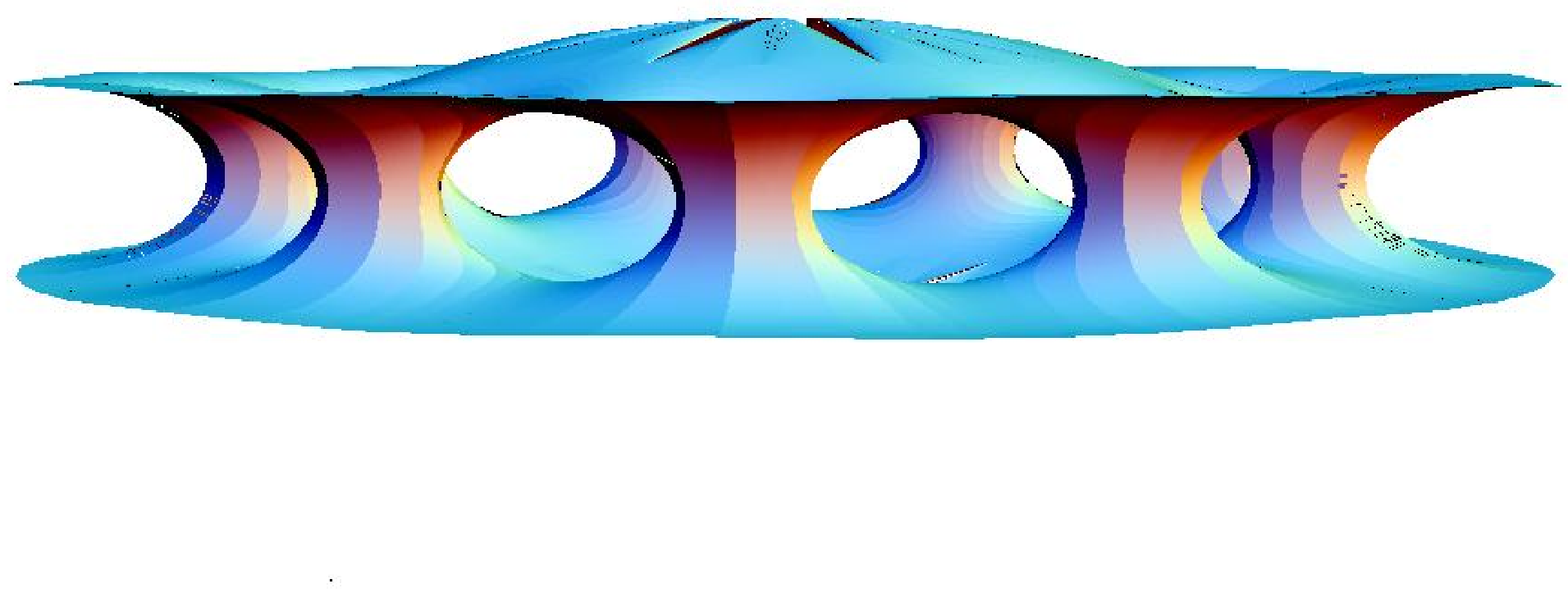} \\
\end{tabular}
\end{center}
\caption{Half cut-away of higher genus catenoids in $H^3$ (in 
         the Poincar\'e ball model), on the left, 
         and central portions of those same surfaces on the 
         right, for $k=1$, $k=2$, $k=3$ and $k=8$.}
\label{fg:gkcats}
\end{figure} 

\section{The Weierstrass data}

Here we use a more general Weierstrass data than in \cite{RS} and 
\cite{F2}, allowing the genus to be any positive number.  

Take the compact Riemann surface 
\[ \overline{M} = \left\{ 
(z,w) \in (\C \cup \{ \infty \})^2 \, \left| 
\, w^{k+1} = z \left( \frac{z-\lambda^{-1}}{\lambda-z} 
\right)^k \right. \right\} \; , \] 
where $k$ is any positive integer and $\lambda$ is 
any real constant such that $\lambda > 1$.  
This $\overline{M}$ has the structure of a Riemann 
surface, and $z$ provides a local complex coordinate for 
$\overline{M}$ at all but four points.  
At those four points $(0,0)$, 
$(\infty,\infty)$, $(\lambda,\infty)$ and $(\lambda^{-1},0)$, 
we can take a local coordinate $\zeta_0$,
$\zeta_\infty$, $\zeta_\lambda$
and $\zeta_{\lambda^{-1}}$ satisfying $\zeta_0^{k+1} = z$,
$\zeta_\infty^{-k-1} = z$, $\zeta_\lambda^{k+1} = z-\lambda$
and $\zeta_{\lambda^{-1}}^{k+1} = z-\lambda^{-1}$, respectively.
By applying the Riemann-Hurwitz 
relation, we find that this Riemann surface  
has genus $k$.  Then take 
\[ M = \overline{M} \setminus \{ (0,0),(\infty,\infty) \} \; . \] 
Here $(0,0)$ and $(\infty,\infty)$ will represent the two 
ends of the surfaces we will construct (surfaces having 
domain $M$).  Let $\widetilde{M}$ be the universal cover of $M$.  

We now take the Weierstrass data 
\begin{equation}\label{eq:w-data} 
G = \lambda^{k/(k+1)} w \; , \;\;\; 
\Omega = c \cdot \frac{dz}{zw} \; . \end{equation}
Here $c$ is any nonzero real constant.  

   \begin{remark}\label{rem:onthechoiceofG}
    Multiplying the hyperbolic Gauss map $G$ by a constant is equivalent 
    to just a rigid motion of the surface, in both $H^3$ and $S^3_1$.  
    So we could have chosen $G$ to be $G=w$ in Equation \eqref{eq:w-data}, 
    as our goal is to construct surfaces in $H^3$ and $S^3_1$.
    However, when considering relations with minimal surfaces in $\R^3$ and
    maximal surface in $\R^3_1$, the choice of $G$ in \eqref{eq:w-data}
    will prove to be useful, as we will see in 
    Remark~\ref{rm:min-max}.  So here we use
    the hyperbolic Gauss map $G$ as given in \eqref{eq:w-data}. 
   \end{remark}

Note that $\deg(G)=k+1$.  Now take a solution 
\[ F : \widetilde{M} \to \SL(2,\C) \] of 
\begin{equation}\label{eq:bryant-dual} dF \cdot F^{-1} = 
\begin{pmatrix} G & -G^2 \\ 1 & -G \end{pmatrix} \Omega \; . \end{equation}
Noting that, with $a^*=\bar a^t$, 
\[
H^3=\{aa^*\,|\,a\in\SL (2,\C)\},
\quad
S^3_1=\{ae_3a^*\,|\,a\in\SL (2,\C)\},
\quad
e_3=\begin{pmatrix}1&0\\0&-1\end{pmatrix}, 
\]
define 
\begin{equation}\label{eq:bryant-dualB}
f_H:=FF^*:\widetilde{M}\to H^3 
\quad\text{and}\quad
f_S:=Fe_3F^*:\widetilde{M}\to S^3_1.
\end{equation}
Then $f_H$ (resp. $f_S$) gives a CMC $1$ immersion in $H^3$ 
(resp. a CMC $1$ face in $S^3_1$), since 
\begin{equation}\label{liftmetric}
\left( 1+|G|^2 \right)^2 \Omega\overline{\Omega}
\end{equation}
gives a positive definite metric on $M$,  
see \cite{UY3} and Theorem 1.9 of \cite{F1}.  
Note that the Hopf differential of both $f_H$ and $f_S$ is written as 
\[
Q=\Omega dG
 =\frac{c\lambda^{k/(k+1)}}{k+1}
  \cdot\frac{z^2+\{(k-1)\lambda^{-1}-(k+1)\lambda\}z+1}
            {z^2(z-\lambda^{-1})(z-\lambda)}dz^2.
\]

\section{Symmetries of the surface}

We define 
\[
w_0:=\lambda^{-k/(k+1)}\in\R \quad\text{and}\quad 
\Lambda:=e^{2\pi i/(k+1)}.
\]
Then it is easy to see that 
\[
(1,\Lambda^j w_0)\in M
\quad\text{for any $j=0,1,\dots , k$}.
\]

Consider the symmetries 
\[
\kappa_1(z,w)=\left(\bar z,\bar w\right),\quad
\kappa_2(z,w)=\left(\frac{1}{z},\frac{1}{\lambda^{2k/(k+1)}} 
                    \frac{1}{w} \right),\quad
\kappa_3(z,w)=\left(\bar z,\Lambda \bar w\right)
\]
on $\overline{M}$.  
Then we have the following, which follows from a proof analogous 
to proofs found in \cite{RS} and \cite{F2}: 

\begin{lemma}\label{lm:rs-first}
Let 
\[
F(z,w)=\begin{pmatrix}A&B\\C&D\end{pmatrix}
\]
be a solution of \eqref{eq:bryant-dual} with the initial condition 
$F(1,w_0)=\id$.  Then 
\begin{equation}\label{eq:RSlemma51}
F(\kappa_1(z,w))=\begin{pmatrix}\bar A&\bar B\\\bar C&\bar D\end{pmatrix},
\quad
F(\kappa_2(z,w))=\begin{pmatrix}D&C\\B&A\end{pmatrix},
\quad
F(\kappa_3(z,w))
=\begin{pmatrix}\bar A&\Lambda\bar B\\\Lambda^{-1}\bar C&\bar D\end{pmatrix}.
\end{equation}
\end{lemma}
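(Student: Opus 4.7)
The plan is to prove each of the three formulas by the uniqueness theorem for the linear matrix ODE \eqref{eq:bryant-dual}. For each symmetry $\kappa_i$, I will construct an auxiliary matrix function $\Phi_i$ on $\widetilde M$, verify that it satisfies the same ODE and the same initial value as $F$ at $(1,w_0)$, and conclude $\Phi_i=F$. Writing $\alpha:=\begin{pmatrix}G & -G^2\\ 1 & -G\end{pmatrix}\Omega$ for the connection form on the right of \eqref{eq:bryant-dual}, the first step is to pull back $\alpha$ under each $\kappa_i$. Using $G=\lambda^{k/(k+1)}w$, $\Omega=c\,dz/(zw)$, and $\lambda,c\in\R$, direct calculation gives $\kappa_1^{*}G=\bar G$ and $\kappa_1^{*}\Omega=\bar\Omega$; $\kappa_2^{*}G=1/G$ and $\kappa_2^{*}\Omega=-G^{2}\Omega$; and $\kappa_3^{*}G=\Lambda\bar G$ and $\kappa_3^{*}\Omega=\Lambda^{-1}\bar\Omega$. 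A short matrix computation then yields the three conjugation identities
\[
\kappa_1^{*}\alpha=\bar\alpha,\qquad \kappa_2^{*}\alpha=J\alpha J,\qquad \kappa_3^{*}\alpha=P\,\bar\alpha\,P^{-1},
\]
with $J=\begin{pmatrix}0 & 1\\ 1 & 0\end{pmatrix}$, $P=\operatorname{diag}(\Lambda,1)$, and $\bar\alpha$ the $(0,1)$-form obtained from $\alpha$ by entrywise complex conjugation.

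Second, define the candidates $\Phi_1:=\overline{F\circ\kappa_1}$, $\Phi_2:=J(F\circ\kappa_2)J$, and $\Phi_3:=P\overline{F\circ\kappa_3}P^{-1}$. Using the three conjugation identities above together with the fact that $d$ commutes with complex conjugation, a routine calculation shows $d\Phi_i=\alpha\Phi_i$ for each $i$ (for example, $d\Phi_1=\overline{d(F\circ\kappa_1)}=\overline{\bar\alpha\cdot(F\circ\kappa_1)}=\alpha\Phi_1$, and similarly for the other two). For the initial-value matching at $(1,w_0)$: $\kappa_1$ fixes $(1,w_0)$ because $w_0\in\R$, and $\kappa_2$ fixes $(1,w_0)$ because $\lambda^{2k/(k+1)}w_0^{2}=1$, so $\Phi_1(1,w_0)=\overline{\id}=\id$ and $\Phi_2(1,w_0)=J\,\id\,J=\id$. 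Uniqueness of the ODE solution then forces $\Phi_1=\Phi_2=F$, and rearranging gives the first two formulas of the lemma.

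The main obstacle is the $\kappa_3$ case, because $\kappa_3(1,w_0)=(1,\Lambda w_0)\neq(1,w_0)$. I would handle it by factoring $\kappa_3=\sigma\circ\kappa_1$, where $\sigma(z,w):=(z,\Lambda w)$ is a holomorphic deck transformation of the projection $(z,w)\mapsto z$ satisfying $\sigma^{*}\alpha=P\alpha P^{-1}$. Via the already-proved $\kappa_1$ identity, the $\kappa_3$ formula reduces to the intermediate claim $F\circ\sigma=PFP^{-1}$. Uniqueness of solutions of $dX=\alpha X$ (up to right multiplication by a constant) produces $F\circ\sigma=PFP^{-1}\cdot M_{\sigma}$ for some $M_{\sigma}\in\SL(2,\C)$ determined by the base-point value, and the content of the lemma is that the lift of $\sigma$ (equivalently, of $\kappa_3$) to $\widetilde M$ can be chosen so that $M_{\sigma}=\id$ — that is, so that the base lift of $(1,w_0)$ is carried to the preimage of $(1,\Lambda w_0)$ at which $F$ takes the value $\id$. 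This normalization of the lift is the step that mirrors the analogous arguments of \cite{RS} and \cite{F2}.
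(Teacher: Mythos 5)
Your treatment of $\kappa_1$ and $\kappa_2$ is correct and is essentially the paper's own argument: pull back the coefficient matrix of \eqref{eq:bryant-dual}, obtain the conjugation identities, and match initial values at the base point, which works because $(1,w_0)$ is fixed by $\kappa_1$ (as $w_0\in\R$) and by $\kappa_2$ (as $\lambda^{2k/(k+1)}w_0^2=1$). Your matrices $J=\left(\begin{smallmatrix}0&1\\1&0\end{smallmatrix}\right)$ and $P=\diag(\Lambda,1)$ differ from the paper's $\left(\begin{smallmatrix}0&i\\i&0\end{smallmatrix}\right)$ and $\diag(\sqrt{\Lambda},1/\sqrt{\Lambda})$ only by scalar factors, which is immaterial for conjugation, and your pullback computations agree with the paper's (note $\lambda^{2k/(k+1)}w^2=G^2$).

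The gap is in the $\kappa_3$ case, at exactly the step you defer. You reduce to $F\circ\sigma=PFP^{-1}M_\sigma$ and assert that the lift of $\sigma$ can be chosen so that $M_\sigma=\id$, i.e.\ so that the base point is carried to ``the preimage of $(1,\Lambda w_0)$ at which $F$ takes the value $\id$.'' Such a preimage need not exist: the values of $F$ on the fiber of $\widetilde{M}$ over $(1,\Lambda w_0)$ are precisely the transport matrices $\Pi(\delta)$ of paths $\delta$ in $M$ from $(1,w_0)$ to $(1,\Lambda w_0)$, and this set is a single coset of the monodromy group in $\SL(2,\C)$, which has no reason to contain $\id$. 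So no choice of lift makes $M_\sigma=\id$, and the third identity cannot hold literally as an equality of functions on $\widetilde{M}$ normalized only by $F(1,w_0)=\id$. The reading consistent with the paper's proof (which checks only that $\id$ is fixed by $X\mapsto P\bar{X}P^{-1}$) and with the way the lemma is used later is that $F(\kappa_3(\,\cdot\,))$ denotes the solution of \eqref{eq:bryant-dual} along $\kappa_3$-images of paths with the initial condition re-based to $\id$ at $\kappa_3(1,w_0)=(1,\Lambda w_0)$; equivalently, the statement is about transport matrices, $\Pi(\kappa_3\circ\gamma)=P\,\overline{\Pi(\gamma)}\,P^{-1}$, which is insensitive to the right-constant ambiguity and is exactly what the proof of Lemma~\ref{lem:secondlemma} requires. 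With that reinterpretation your argument closes; with the normalization as you stated it, it does not.
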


\begin{proof}
Note that we have the following relations under the symmetries $\kappa_j$: 
\begin{eqnarray*}
& G \circ \kappa_1 = \bar G \; , \;\;\; 
G \circ \kappa_2 = G^{-1} \; , \;\;\; 
G \circ \kappa_3 = \Lambda \bar G \; , 
& \\ &
\kappa_1^* \Omega = \bar \Omega \; , \;\;\; 
\kappa_2^* \Omega = -\lambda^{2k/(k+1)} w^2 \Omega \; , \;\;\; 
\kappa_3^* \Omega = \Lambda^{-1} \bar \Omega \; . &
\end{eqnarray*}
It follows that 
\begin{align*}
\kappa_1^* \begin{pmatrix}
G & -G^2 \\ 1 & -G \end{pmatrix}
\Omega &= \overline{\begin{pmatrix}
G & -G^2 \\ 1 & -G \end{pmatrix}
\Omega} \; , \\
\kappa_2^* \begin{pmatrix}
G & -G^2 \\ 1 & -G \end{pmatrix}
\Omega &=  
\begin{pmatrix}0&i\\i&0\end{pmatrix}^{-1}
\begin{pmatrix} G & -G^2 \\ 1 & -G \end{pmatrix} \Omega
\begin{pmatrix}0&i\\i&0\end{pmatrix} \; , \\
\kappa_3^* \begin{pmatrix}
G & -G^2 \\ 1 & -G \end{pmatrix}
\Omega &= 
\begin{pmatrix}
\sqrt{\Lambda} & 0 \\ 0 & 1/\sqrt{\Lambda} 
\end{pmatrix}
\overline{\begin{pmatrix}
G & -G^2 \\ 1 & -G \end{pmatrix}
\Omega} 
\begin{pmatrix}
1/\sqrt{\Lambda} & 0 \\ 0 & \sqrt{\Lambda} 
\end{pmatrix} \; .  
\end{align*}
Because the initial condition 
$F(1,w_0)=\id$ satisfies 
\begin{eqnarray*} 
&
\overline{F(1,w_0)} = F(1,w_0) \; , \qquad
\begin{pmatrix}0&i\\i&0\end{pmatrix}^{-1}\!\!\!
F(1,w_0) \begin{pmatrix}0&i\\i&0\end{pmatrix}
 = F(1,w_0) \; , 
 & \\ &
\begin{pmatrix}
\sqrt{\Lambda} & 0 \\ 0 & 1/\sqrt{\Lambda} 
\end{pmatrix}
\overline{F(1,w_0)} 
\begin{pmatrix}
1/\sqrt{\Lambda} & 0 \\ 0 & \sqrt{\Lambda} 
\end{pmatrix}
= F(1,w_0) \; , 
&
\end{eqnarray*} 
the lemma follows.  
\end{proof}

We now consider two loops in $M$ (see Figure \ref{fg:loops}): 

\begin{figure}[htbp] 
\begin{center}
\unitlength=1.0pt
\begin{picture}(300,100)
\put(50,50){\line(1,0){200}}
\thicklines
\put(50,50){\line(1,0){20}}
\put(110,50){\line(1,0){80}}
\thinlines
\put(70,50){\circle*{2}}
\put(110,50){\circle*{2}}
\put(150,50){\circle*{2}}
\put(190,50){\circle*{2}}
\put(70,42){\makebox(0,0)[cc]{$0$}}
\put(114,44){\makebox(0,0)[cc]{$\lambda^{-1}$}}
\put(150,42){\makebox(0,0)[cc]{$1$}}
\put(190,44){\makebox(0,0)[cc]{$\lambda$}}
\put(116,27){\makebox(0,0)[cc]{$\gamma_1$}}
\put(184,35){\vector(-1,0){0}}
\put(116,35){\vector(-1,0){0}}
\bezier57(150,50)(167,35)(184,35)
\bezier40(184,35)(199,35)(199,50)
\bezier29(155,55)(167,65)(184,65)
\bezier20(184,65)(199,65)(199,50)
\bezier40(101,50)(101,35)(116,35)
\bezier57(116,35)(133,35)(150,50)
\bezier20(101,50)(101,65)(116,65)
\bezier29(116,65)(133,65)(145,55)
\bezier11(155,55)(150,50.5)(145,55)
\put(100,98){\makebox(0,0)[cc]{$\gamma_2$}}
\put(100,90){\vector(1,0){0}}
\bezier150(150,50)(125,10)(100,10)
\bezier150(100,10)(60,10)(60,50)
\bezier150(60,50)(60,90)(100,90)
\bezier150(100,90)(125,90)(150,50)
\end{picture}
\end{center}
\caption{Projection to the $z$-plane of the loops $\gamma_1$ and $\gamma_2$.
         (Note that $z=1$ is not a branch point of $M$, and $\gamma_1$ starts at 
         one lift of $z=1$, and then passes through another lift of $z=1$, 
         and then returns to the first lift of $z=1$.)}
\label{fg:loops}
\end{figure}
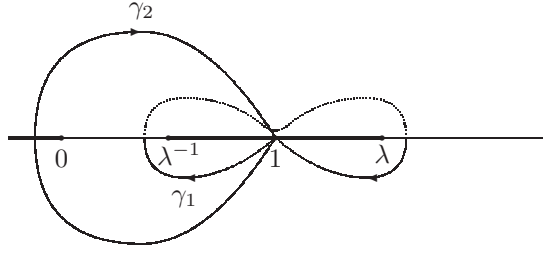 

\begin{itemize}
\item 
The loop $\gamma_1:[0,1]\to M$ starts at $\gamma_1(0)=(1,w_0)\in M$. 
Its first portion has $z$ coordinate in $\{\Im(z)<0\}$ and ends at a point 
$(z,w)$ where $z\in\R$ and $0<z<\lambda^{-1}$.  
Its second portion starts at $(z,w)$ and ends at $(1,\Lambda w_0)$ and has $z$ 
coordinate in $\{\Im(z)>0\}$.  
Its third portion starts at $(1,\Lambda w_0)$ and ends at 
$(1/z,\Lambda /(\lambda^{2k/(k+1)}w))$ and has $z$ coordinate in 
$\{\Im(z)>0\}$.  
Its fourth and last portion starts at $(1/z,\Lambda /(\lambda^{2k/(k+1)}w))$ 
and returns to the base point $\gamma_1(1)=(1,w_0)$ and has $z$ coordinate in 
$\{\Im(z)<0\}$.  
\item
The loop $\gamma_2:[0,1]\to M$ starts at $\gamma_2(0)=(1,w_0)$.  
Its first portion has $z$ coordinate in $\{\Im(z)<0\}$ and ends at a 
point $(z,w)$ where $z\in\R$ and $z<0$.  
Its second and last portion starts at $(z,w)$ and returns to 
$\gamma_2(1)=(1,w_0)$ and has $z$ coordinate in $\{\Im(z)>0\}$.  
\end{itemize}

We will also consider the following two paths (not loops) in $M$ 
(see Figure \ref{fg:curves}): 

\begin{figure}[htbp] 
\begin{center}
\unitlength=1.0pt
\begin{picture}(300,60)
\put(50,50){\line(1,0){200}}
\thicklines
\put(50,50){\line(1,0){20}}
\put(110,50){\line(1,0){80}}
\thinlines
\put(70,50){\circle*{2}}
\put(110,50){\circle*{2}}
\put(150,50){\circle*{2}}
\put(190,50){\circle*{2}}
\put(70,42){\makebox(0,0)[cc]{$0$}}
\put(114,44){\makebox(0,0)[cc]{$\lambda^{-1}$}}
\put(150,42){\makebox(0,0)[cc]{$1$}}
\put(190,44){\makebox(0,0)[cc]{$\lambda$}}
\put(116,27){\makebox(0,0)[cc]{$c_1$}}
\put(116,35){\vector(-1,0){0}}
\bezier40(101,50)(101,35)(116,35)
\bezier57(116,35)(133,35)(150,50)
\put(100,2){\makebox(0,0)[cc]{$c_2$}}
\put(100,10){\vector(-1,0){0}}
\bezier150(150,50)(125,10)(100,10)
\bezier150(100,10)(60,10)(60,50)
\end{picture}
\end{center}
\caption{Projection to the $z$-plane of the curves $c_1$ and $c_2$.}
\label{fg:curves}
\end{figure}
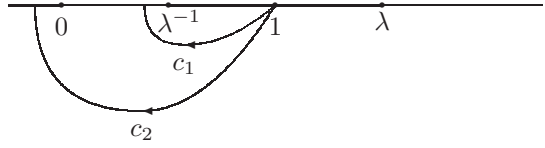 
\begin{itemize}
\item 
Let $c_1:[0,1]\to M$ be a curve starting at $c_1(0)=(1,w_0)$ 
whose projection to the $z$-plane is an embedded curve in $\{\Im(z)<0\}$, 
and whose endpoint $c_1(1)$ has a $z$ coordinate so that $z\in\R$ 
and $0<z<\lambda^{-1}$.  
\item
Let $c_2(t):[0,1]\to M$ be a curve starting at $c_2(0)=(1,w_0)$ 
whose projection to the $z$-plane is an embedded curve in $\{\Im(z)<0\}$, 
and whose endpoint $c_2(1)$ has a $z$ coordinate so that $z\in\R$ 
and $z<0$.  
\end{itemize}
With $F(1,w_0)=\id$, we solve Equation \eqref{eq:bryant-dual} along 
these two paths to find 
\[ 
F(c_1(1))=\begin{pmatrix}A_1&B_1\\C_1&D_1\end{pmatrix},
\quad\text{and}\quad
F(c_2(1))=\begin{pmatrix}A_2&B_2\\C_2&D_2\end{pmatrix}.
\]
Let $\tau_j$ be the deck transformation of $\widetilde{M}$ associated to the 
homotopy class of $\gamma_j$ ($j=1,2$). 

\begin{lemma}\label{lem:secondlemma}
We have that $F\circ\tau_1=F\Phi_1$ and $F\circ\tau_2=F\Phi_2$, where 
\begin{align*}
\Phi_1&:=\begin{pmatrix}\bar{A}_1&-\bar{C}_1\\
                       -\bar{B}_1&\bar{D}_1\end{pmatrix}
         \begin{pmatrix}D_1&\Lambda C_1\\\Lambda^{-1}B_1&A_1\end{pmatrix}
         \begin{pmatrix}\bar{D}_1&-\Lambda\bar{B}_1\\
                        -\Lambda^{-1}\bar{C}_1&\bar{A}_1\end{pmatrix}
         \begin{pmatrix}A_1&B_1\\C_1&D_1\end{pmatrix} \; , 
\\ 
\Phi_2&:=\begin{pmatrix}\bar{D}_2&-\bar{B}_2\\-\bar{C}_2&\bar{A}_2\end{pmatrix}
         \begin{pmatrix}A_2&B_2\\C_2&D_2\end{pmatrix} \; .
\end{align*}
\end{lemma}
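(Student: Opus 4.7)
The plan is to exploit the fact that \eqref{eq:bryant-dual} is a linear ODE whose monodromy around a loop acts by right multiplication and is independent of the starting point, so $F\circ\tau_j = F\cdot\Phi_j$ holds on all of $\widetilde{M}$ as soon as it holds at the base point. It therefore suffices to compute $\Phi_j = F(\tau_j\cdot(1,w_0))$ as the path-ordered exponential of the Maurer--Cartan form $dF\cdot F^{-1}$ along $\gamma_j$. The strategy is to decompose $\gamma_j$ into subpaths which, as paths in $M$, coincide with $\kappa_i$-images of $c_1$ or $c_2$; Lemma~\ref{lm:rs-first} then gives the monodromy of each subpath in closed form, and concatenation yields $\Phi_j$.

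For $\gamma_2$, I would first arrange the $w$-branch so that $w\in\R$ at $c_2(1)$, which is possible since $w^{k+1}$ is real at the real endpoint $z<0$. Then $\kappa_1$ fixes this endpoint, and the upper-half-plane return portion of $\gamma_2$ is exactly $(\kappa_1\circ c_2)^{-1}$ in $M$. The identity $F\circ\kappa_1 = \bar F$ from Lemma~\ref{lm:rs-first} shows that the monodromy along $\kappa_1\circ c_2$ is $\overline{F(c_2(1))}$, hence along its reverse it is $\overline{F(c_2(1))}^{-1}$. Left-multiplying the monodromy $F(c_2(1))$ of the $c_2$-piece by this inverse gives $\Phi_2 = \overline{F(c_2(1))}^{-1}\,F(c_2(1))$, and expanding the inverse via $\det F = 1$ recovers the stated formula.

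For $\gamma_1$, I would introduce $\sigma := \kappa_3\circ\kappa_1$ (so $\sigma(z,w) = (z,\Lambda w)$), and arrange the branch at $c_1(1)=(z_*,w_*)$ so that $w_* = \Lambda\bar w_*$; this is consistent with $w_*^{k+1}\in\R$. Under these choices I would identify the four portions of $\gamma_1$, in order, with $c_1$, $(\kappa_3\circ c_1)^{-1}$, $\sigma\circ\kappa_2\circ c_1$, and $(\kappa_1\circ\kappa_2\circ c_1)^{-1}$, which is checked by matching endpoints and the sign of $\Im(z)$ along each piece. The monodromy of each portion is then immediate from Lemma~\ref{lm:rs-first}: one composes the $\kappa_1$ and $\kappa_3$ rules to get $F(\sigma(p))$, and then combines with the $\kappa_2$ rule to evaluate $F(\sigma\kappa_2(p))$ and $F(\kappa_1\kappa_2(p))$. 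Concatenating in the correct order (later pieces acting on the left) recovers the four-factor expression for $\Phi_1$. The main obstacle is really the bookkeeping around the branches of $w$: one must verify that the chosen $\kappa_i$-images of $c_1$ and $c_2$ genuinely coincide in $M$ with the portions of $\gamma_1$ and $\gamma_2$ described in the text, so that in particular the endpoints of consecutive pieces agree in both $z$ and $w$. Once this matching is secured, Lemma~\ref{lm:rs-first} reduces the rest to routine matrix algebra.
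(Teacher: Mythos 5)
Your proposal is correct and follows essentially the same route as the paper: the identical decomposition of $\gamma_1$ and $\gamma_2$ into $\kappa_i$-images of $c_1$ and $c_2$ (your third portion $\sigma\circ\kappa_2\circ c_1=\kappa_3\circ\kappa_1\circ\kappa_2\circ c_1$ is the same map as the paper's $\kappa_2\circ\kappa_1\circ\kappa_3\circ c_1$), followed by Lemma~\ref{lm:rs-first} and matrix multiplication with later pieces acting on the left. The branch/endpoint bookkeeping you flag is precisely the matching of consecutive endpoints that the paper checks via the fixed points of the $\kappa_i$ at $(1,w_0)$ and $(1,\Lambda w_0)$.
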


\begin{proof}
The loop $\gamma_1$ has four portions, as described above.  
The first portion is represented by the curve $c_1$.  The 
second portion is represented by $\kappa_3 \circ c_1^{-1}$.  
Using the facts that the third portion starts at the point 
$(1,\Lambda w_0)$ and that $\kappa_2 \circ \kappa_1 \circ 
\kappa_3(1,w_0) = (1,\Lambda w_0)$, we have that the third 
portion is represented by $\kappa_2 \circ \kappa_1 \circ 
\kappa_3 \circ c_1$.  The final fourth portion is represented 
by $\kappa_1 \circ \kappa_2 \circ c_1^{-1}$, which follows 
from noting that $\kappa_1(1,w_0)=(1,w_0)$, 
$\kappa_2(1,w_0)=(1,w_0)$ and $\kappa_3(1,w_0)=(1,\Lambda w_0)$.  
(In particular, we see that $(1,w_0)$ is in the 
fixed point set of $\kappa_1$ and $\kappa_2$ 
but not in that of $\kappa_3$.)  

Thus we have that 
\[ \gamma_1 = (\kappa_1 \circ \kappa_2 
\circ c_1^{-1}) \circ (\kappa_2 \circ \kappa_1 \circ 
\kappa_3 \circ c_1) \circ (\kappa_3 \circ c_1^{-1}) 
\circ c_1 \; . \]  
Similarly, we can see that 
\[ \gamma_2 = (\kappa_1 \circ c_2^{-1}) \circ c_2 \; . \]  
We can then apply Lemma \ref{lm:rs-first} to get the 
result.  
\end{proof}

Using the Bryant type representation \eqref{eq:bryant-dualB} to make CMC $1$ surfaces in 
$H^3$ and $S_1^3$, the conditions for the resulting surfaces to 
be well defined on $M$ are that $\Phi_1$ and $\Phi_2$ are in 
$\SU(2)$ and $\SU(1,1)$, respectively, and by symmetry, only the 
homotopy classes coming from $\gamma_1$ and $\gamma_2$ need be 
considered.  However, the initial condition $F(1,w_0)=\id$ will 
not cause $\Phi_1$ and $\Phi_2$ to lie in $\SU(2)$ or $\SU(1,1)$.  To 
remedy this, we will change the initial condition for the solution $F$ 
so that it has initial condition
\[ 
F(1,w_0)=\begin{pmatrix} \alpha & \beta \\ \beta & \alpha 
\end{pmatrix} \in \SL(2,\R) 
\] 
in the case the ambient space is $H^3$ (that is, the $\SU(2)$ case), and 
\[ 
F(1,w_0)=\begin{pmatrix} \alpha & \beta \\ \alpha & -\beta
\end{pmatrix} \in \SL(2,\R) 
\] 
in the case the ambient space is $S^3_1$ (that is, the $\SU(1,1)$ case).  

Note that with these changes of initial condition of $F$, we still 
have enough symmetry to conclude that the resulting surfaces are 
well defined on $M$ just by looking only at the two homotopy classes 
represented by $\gamma_1$ and $\gamma_2$.  
This is because the homotopy group of $M$ is generated by 
$[\gamma_1]$ and $[\gamma_2]$.  
The monodromies associated 
to those two homotopy classes are now, for $j=1,2$,  
\[ 
\begin{pmatrix} \alpha & \beta \\ \beta & \alpha \end{pmatrix}^{-1} 
\Phi_j \begin{pmatrix} \alpha & \beta \\ \beta & \alpha \end{pmatrix}
\] 
in the $\SU(2)$ case, and 
\[ 
\begin{pmatrix} \alpha & \beta \\ \alpha & -\beta \end{pmatrix}^{-1} 
\Phi_j \begin{pmatrix} \alpha & \beta \\ \alpha & -\beta \end{pmatrix}
\] 
in the $\SU(1,1)$ case.  

The closing conditions, that is, the conditions that the surfaces are 
well defined on $M$ itself, are now that the above pairs of matrices lie 
in $\SU(2)$ in the first case, and in $\SU(1,1)$ in the second case.  
Noting that $\Phi_1$ and $\Phi_2$ take the forms 
\[ 
\Phi_1 = \begin{pmatrix} r_1 & p \\ -\bar p & r_2 \end{pmatrix} \; , \quad 
\Phi_2 = \begin{pmatrix} q & i r_3 \\ i r_4 & \bar q \end{pmatrix} 
\] 
for complex numbers $p$, $q$ and real numbers 
$r_1$, $r_2$, $r_3$, $r_4$, a direct computation gives 
that the closing conditions are 
\[ 
\frac{2\Re(p)}{r_2-r_1} = \frac{2\Im(q)}{r_4-r_3} 
= \frac{\alpha^2+\beta^2}{2 \alpha \beta} 
= \frac{1+2 \beta^2}{2 \beta \sqrt{1+\beta^2}} 
\in (-\infty,-1) \cup (1,\infty) 
\] 
in the $\SU(2)$ case, and 
\[ 
\frac{2\Re(p)}{r_2-r_1} = \frac{2\Im(q)}{r_4-r_3} 
= \frac{\alpha^2-\beta^2}{\alpha^2+\beta^2} 
= \frac{1-4 \beta^4}{1+4 \beta^4} 
\in (-1,1) 
\] 
in the $\SU(1,1)$ case.  So we have now proven the following 
lemma: 

\begin{lemma}
The single closing condition for one of the surfaces 
in \eqref{eq:bryant-dualB} is that 
\begin{equation}\label{eq:cond} 
h_1(c,\lambda)=h_2(c,\lambda)\in\R\setminus\{\pm 1\},
\end{equation}
where
\[
h_1(c,\lambda) = \frac{2\Re(p)}{r_2-r_1} \quad\text{and}\quad
h_2(c,\lambda) = \frac{2\Im(q)}{r_4-r_3} 
\] 
holds, and then the appropriate $\alpha$ and $\beta$ can be found.  
Whether one obtains a surface in $H^3$ or 
$S_1^3$ is determined by whether the absolute value of 
$h_1(c,\lambda)=h_2(c,\lambda)$ is greater than or less than $1$.  
\end{lemma}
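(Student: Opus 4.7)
The plan is to organise the calculation already sketched in the paragraphs preceding the statement into three self-contained steps. The first step is to verify the particular matrix shapes
\[
\Phi_1=\begin{pmatrix}r_1&p\\-\bar p&r_2\end{pmatrix},\qquad
\Phi_2=\begin{pmatrix}q&ir_3\\ir_4&\bar q\end{pmatrix}
\]
with $r_1,r_2,r_3,r_4\in\R$ and $p,q\in\C$. Starting from the explicit four-factor product for $\Phi_1$ supplied by Lemma~\ref{lem:secondlemma}, one notices that the outer two factors are related by complex conjugation and the inner two factors are related to $F(c_1(1))$ through the $\kappa_2$- and $\kappa_3$-transformation formulas of Lemma~\ref{lm:rs-first}; using $A_1D_1-B_1C_1=1$ and tracking complex conjugation carefully forces the diagonal entries of $\Phi_1$ to be real and the $(2,1)$ entry to be $-\bar p$. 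The analogous but shorter computation for $\Phi_2$, which involves only $\kappa_1$ and $c_2$, produces purely imaginary off-diagonal entries and complex-conjugate diagonal entries.

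The second step is to replace the initial condition $F(1,w_0)=\id$ by the matrix $T_1=\begin{pmatrix}\alpha&\beta\\\beta&\alpha\end{pmatrix}$ in the $H^3$ case and $T_2=\begin{pmatrix}\alpha&\beta\\\alpha&-\beta\end{pmatrix}$ in the $S^3_1$ case, both in $\SL(2,\R)$, and to impose that the conjugated monodromies $T^{-1}\Phi_jT$ lie in $\SU(2)$ or $\SU(1,1)$, respectively. Expanding all four entries and using $\det\Phi_j=1$ together with the reality information from the first step, the diagonal equations reduce to the off-diagonal ones, leaving exactly the single real identity $2\Re(p)/(r_2-r_1)=(\alpha^2+\beta^2)/(2\alpha\beta)$ in the $\SU(2)$ case and $2\Im(q)/(r_4-r_3)=(\alpha^2-\beta^2)/(\alpha^2+\beta^2)$ in the $\SU(1,1)$ case. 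Imposing the closure conditions coming from both $\gamma_1$ and $\gamma_2$---which together generate $\pi_1(M)$---forces $h_1(c,\lambda)=h_2(c,\lambda)$. The third step parametrises the admissible $(\alpha,\beta)$: the constraint $\alpha^2-\beta^2=1$ (respectively $-2\alpha\beta=1$) gives $\alpha=\sqrt{1+\beta^2}$ (respectively $\alpha=-1/(2\beta)$), and a short algebraic check shows that the two right-hand sides sweep out $(-\infty,-1)\cup(1,\infty)$ and $(-1,1)$ as $\beta$ ranges over $\R\setminus\{0\}$. Thus the closing condition reduces to $h_1(c,\lambda)=h_2(c,\lambda)\in\R\setminus\{\pm1\}$, with the ambient space determined by whether $|h_1|>1$ or $|h_1|<1$.

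The main obstacle I expect is the first step. The four-factor product for $\Phi_1$ carries several $\Lambda$-phases coming from the $\kappa_3$-parts of Lemma~\ref{lm:rs-first}, and one has to check that these phases cancel cleanly so that the pattern with $p$, $-\bar p$ and real $r_1,r_2$ really does emerge; this is where the structural fact that $\gamma_1$ is a genuine loop, decomposed as $(\kappa_1\kappa_2c_1^{-1})\circ(\kappa_2\kappa_1\kappa_3c_1)\circ(\kappa_3c_1^{-1})\circ c_1$, gets used in its strongest form. Once this reality check is complete, the expansion of $T^{-1}\Phi_jT$, the collapse to a single scalar identity, and the range analysis are essentially mechanical linear algebra.
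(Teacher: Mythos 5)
Your proposal is correct and follows essentially the same route as the paper, whose ``proof'' is the computation carried out in the text immediately preceding the lemma: assert the special forms of $\Phi_1$ and $\Phi_2$, conjugate by the $\SL(2,\R)$ initial-condition matrices, reduce to the single scalar identity, and observe that $(1+2\beta^2)/(2\beta\sqrt{1+\beta^2})$ and $(1-4\beta^4)/(1+4\beta^4)$ sweep out $(-\infty,-1)\cup(1,\infty)$ and $(-1,1)$ respectively. The only difference is that you propose to actually verify the matrix shapes of $\Phi_1$ and $\Phi_2$ from Lemmas \ref{lm:rs-first} and \ref{lem:secondlemma}, which the paper simply asserts; that check goes through as you describe.
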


   \begin{remark}\label{rm:min-max}
    If we consider the minimal surface
    \[
     \Re\int \left( 1-G^2,\,i(1+G^2),\,2G\right)\Omega \in\R^3
    \]
    with the Weierstrass data \eqref{eq:w-data}, one can check that
    the period is solved for the loop $\gamma_1$ (regardless of the 
    choice of $\lambda$, and we chose $G$ as we did in \eqref{eq:w-data} 
    in order to make this true, see Remark 
    \ref{rem:onthechoiceofG}), but is never 
    solved for the loop $\gamma_2$ (for any choice of $\lambda$).
    On the other hand, if we consider the maximal surface
    \[
     \Re\int \left( 1+G^2,\,i(1-G^2),\,2G\right)\Omega \in\R^3_1
    \]
    with the Weierstrass data \eqref{eq:w-data}, 
    the period is solved for $\gamma_2$, but never for 
    $\gamma_1$.  See Figure \ref{catenoidnonclosing}.  
   \end{remark}

\begin{figure} 
\begin{center}
\begin{tabular}{cc}
 \includegraphics[width=.40\linewidth]{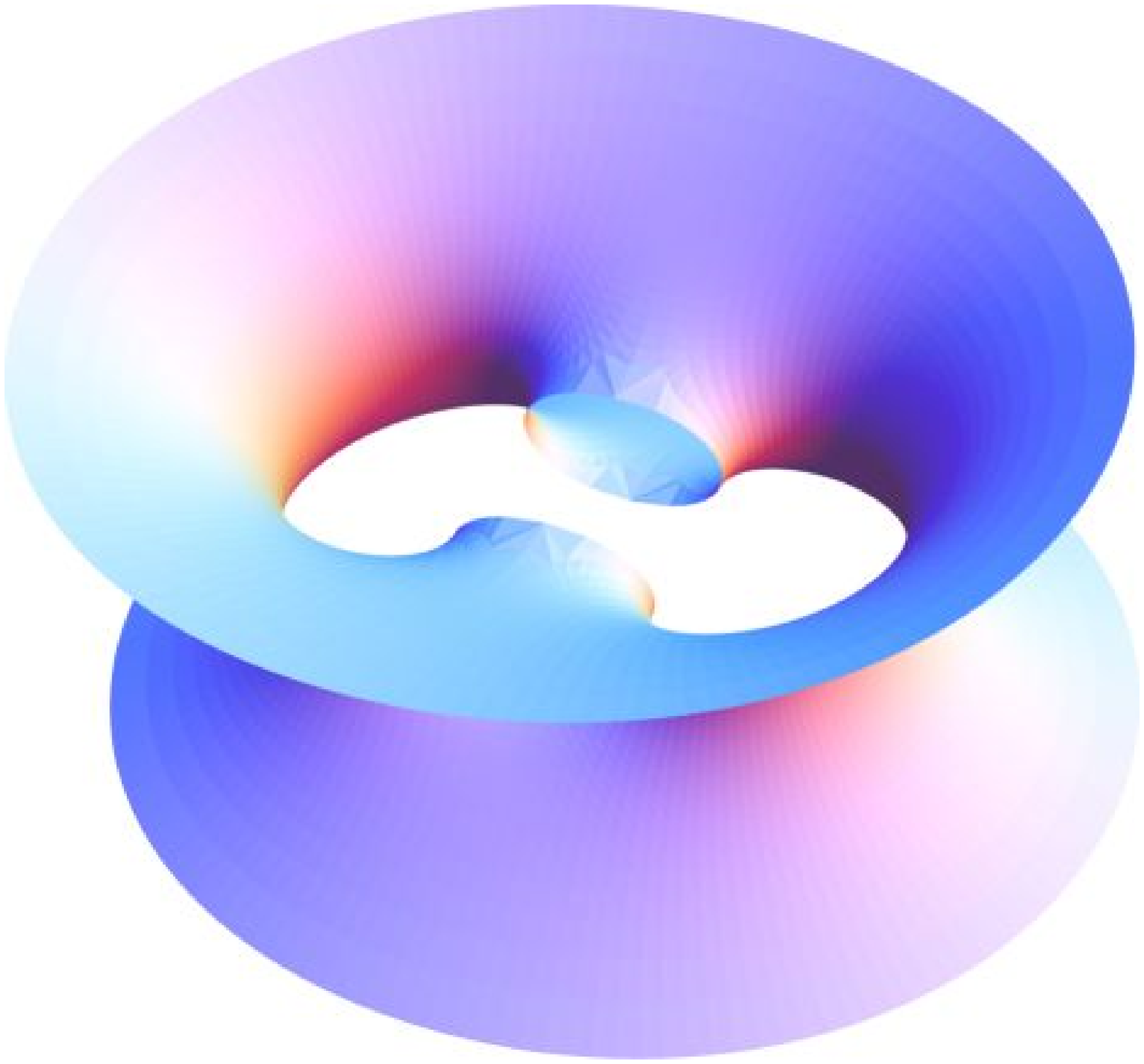} &
 \includegraphics[width=.50\linewidth]{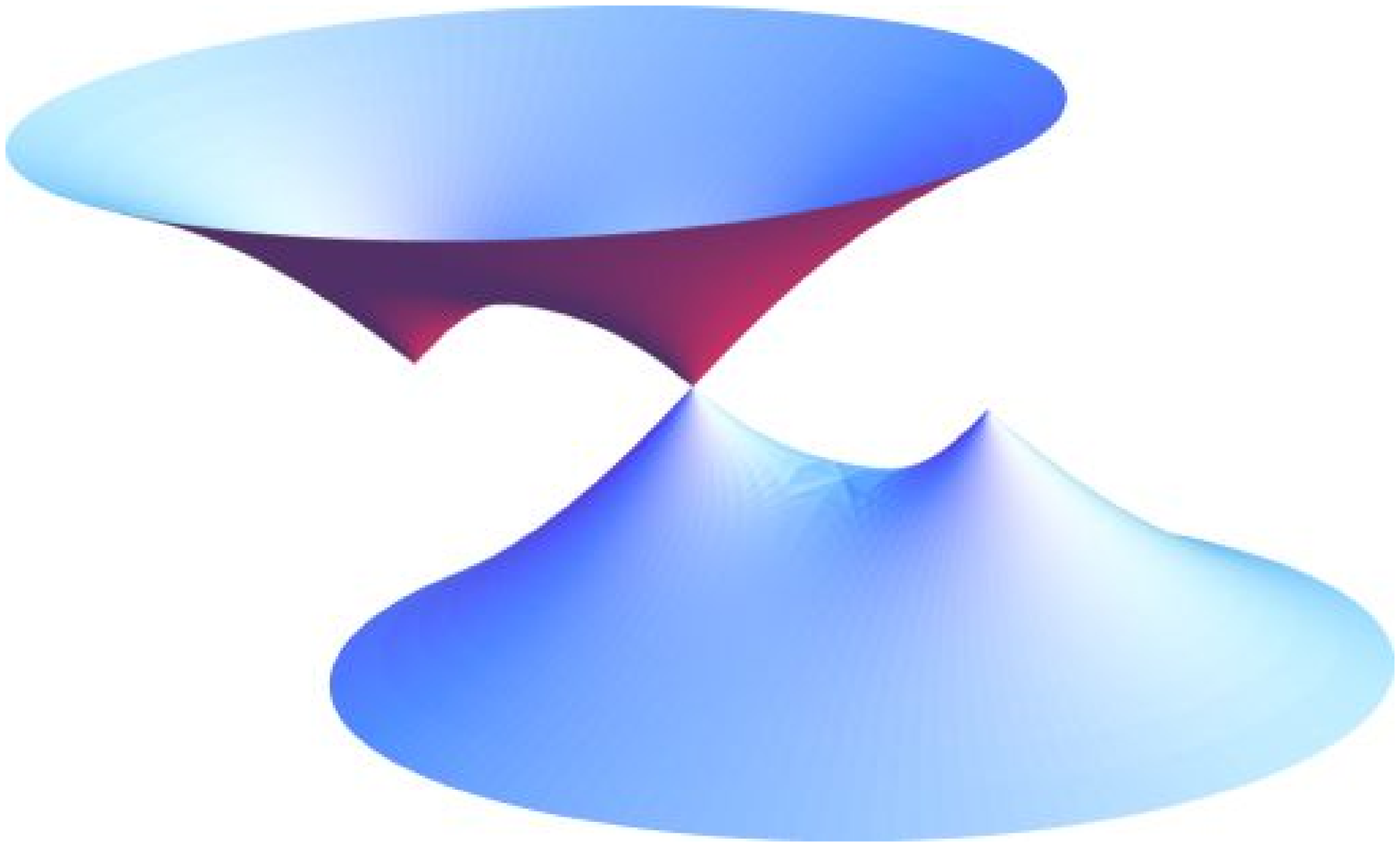} 
\end{tabular}
\end{center}
\caption{A minimal surface in $\R^3$ (left) and a maximal surface 
in $\R_1^3$ (right), constructed as in Remark \ref{rm:min-max}, with 
$k=1$ and $\lambda=2$, in each case showing closing with respect to 
one loop $\gamma_j$ but not with respect to the other.}
\label{catenoidnonclosing}
\end{figure} 

\section{Numerical experiments and the main result}

Fix $\lambda =2$.  Here we provide constants $c\in\R\setminus\{0\}$ so 
that $h_1(c,2)=h_2(c,2)$ for the genus $k=1,\dots ,20$ cases.  
It is a simple application of the 
intermediate value theorem to show $h_1(c,2)=h_2(c,2)$, 
and the functions $h_j(c,2)$ are stable with respect to numerics 
if the paths $c_i(t)$ are chosen well -- so the numerics are not 
delicate, and are expected to give reliable results.
Furthermore, once we have existence of a surface for one value of 
$\lambda = \lambda_0$, we then have existence for all $\lambda$ 
sufficiently close to $\lambda_0$, so we can conclude existence of 
a $1$-parameter family of such surfaces.

\medskip

\begin{table}
\begin{center}
{\footnotesize 
\begin{tabular}{|c||c|c||c|c||c|c|}
\hline
$k$ & \begin{tabular}{c}$c \in\R\setminus\{0\}$ \\ so that \\ 
                        $h_1(c,2)=\phantom{=}$ \\ $\phantom{==}h_2(c,2)$ \end{tabular} & $h_j(c,2)$ 
    & \begin{tabular}{c}$c \in\R\setminus\{0\}$ \\ so that \\ 
                        $h_1(c,2)=\phantom{=}$ \\ $\phantom{==}h_2(c,2)$ \end{tabular} & $h_j(c,2)$ 
    & \begin{tabular}{c}$c \in\R\setminus\{0\}$ \\ so that \\ 
                        $h_1(c,2)=\phantom{=}$ \\ $\phantom{==}h_2(c,2)$ \end{tabular} & $h_j(c,2)$ 
\\ \hline\hline
1 & $-$0.0467552\phantom{0} & $-$6.91432 
  & $-$0.557726 & \phantom{$-$}0.130869 & 
       0.704094 & \phantom{$-$}0.221228\phantom{0} \\ \hline
2 & $-$0.0403901\phantom{0} & $-$4.12613 
  & $-$0.505010 & \phantom{$-$}0.218257 & 
       0.548964 & \phantom{$-$}0.0345248 \\ \hline
3 & $-$0.0334546\phantom{0} & $-$3.32773 
  & $-$0.483326 & \phantom{$-$}0.254392 & 
       0.482090 & $-$0.0678105 \\ \hline
4 & $-$0.0281931\phantom{0} & $-$2.95960 
  & $-$0.471988 & \phantom{$-$}0.273656 & 
       0.444727 & $-$0.132429\phantom{0} \\ \hline
5 & $-$0.0242574\phantom{0} & $-$2.74968 
  & $-$0.465097 & \phantom{$-$}0.285460 & 
       0.420845 & $-$0.176931\phantom{0} \\ \hline
6 & $-$0.0212467\phantom{0} & $-$2.61454 
  & $-$0.460530 & \phantom{$-$}0.293371 & 
       0.404255 & $-$0.209443\phantom{0} \\ \hline
7 & $-$0.0188836\phantom{0} & $-$2.52044 
  & $-$0.457291 & \phantom{$-$}0.299018 & 
       0.392055 & $-$0.234233\phantom{0} \\ \hline
8 & $-$0.0169850\phantom{0} & $-$2.45121 
  & $-$0.454881 & \phantom{$-$}0.303237 & 
       0.382705 & $-$0.253760\phantom{0} \\ \hline
9 & $-$0.0154287\phantom{0} & $-$2.39818 
  & $-$0.453020 & \phantom{$-$}0.306504 & 
       0.375309 & $-$0.269538\phantom{0} \\ \hline
10& $-$0.0141310\phantom{0} & $-$2.35627 
  & $-$0.451543 & \phantom{$-$}0.309105 & 
       0.369312 & $-$0.282553\phantom{0} \\ \hline
11& $-$0.0130330\phantom{0} & $-$2.32232 
  & $-$0.450342 & \phantom{$-$}0.311222 & 
       0.364352 & $-$0.293472\phantom{0} \\ \hline
12& $-$0.0120924\phantom{0} & $-$2.29427 
  & $-$0.449348 & \phantom{$-$}0.312979 & 
       0.360180 & $-$0.302764\phantom{0} \\ \hline
13& $-$0.0112778\phantom{0} & $-$2.27070 
  & $-$0.448511 & \phantom{$-$}0.314459 & 
       0.356623 & $-$0.310766\phantom{0} \\ \hline
14& $-$0.0105655\phantom{0} & $-$2.25062 
  & $-$0.447797 & \phantom{$-$}0.315722 & 
       0.353553 & $-$0.317730\phantom{0} \\ \hline
15& $-$0.00993749 & $-$2.23331 
  & $-$0.447182 & \phantom{$-$}0.316813 & 
       0.350878 & $-$0.323846\phantom{0} \\ \hline
16& $-$0.00937975 & $-$2.21824 
  & $-$0.446645 & \phantom{$-$}0.317764 & 
       0.348525 & $-$0.329260\phantom{0} \\ \hline
17& $-$0.00888113 & $-$2.20499 
  & $-$0.446173 & \phantom{$-$}0.318600 & 
       0.346439 & $-$0.334086\phantom{0} \\ \hline
18& $-$0.00843272 & $-$2.19326 
  & $-$0.445756 & \phantom{$-$}0.319341 & 
       0.344578 & $-$0.338415\phantom{0} \\ \hline
19& $-$0.00802733 & $-$2.18280 
  & $-$0.445383 & \phantom{$-$}0.320003 & 
       0.342907 & $-$0.342319\phantom{0} \\ \hline
20& $-$0.00765905 & $-$2.17341 
  & $-$0.445049 & \phantom{$-$}0.320596 & 
       0.341398 & $-$0.345859\phantom{0} \\ \hline
\end{tabular}
}
\vspace{\baselineskip}
\caption{Numerical results with $\lambda =2$.  
         The first column gives CMC $1$ surfaces in $H^3$.  
         The second column gives CMC $1$ faces in $S^3_1$ with 
         elliptic ends.  
         The third column gives CMC $1$ faces in $S^3_1$ with 
         hyperbolic ends.}
\label{tb:desitter}
\end{center}
\end{table}

\medskip 

The data in Table \ref{tb:desitter} 
(see also Figure \ref{fg:graphs}) 
then imply the numerical result stated in the introduction, 
except that we still need to analyze the behavior of the ends.  
Because $\text{deg}(G)=k+1$, equality in the Osserman inequality is 
satisfied, for both the $H^3$ and $S_1^3$ cases.  
From this, it follows that the ends are complete and 
embedded in the $H^3$ case, and also in the $S_1^3$ case when the 
ends are elliptic (see \cite{RS} and \cite{F2}, for example).  When 
the ends are hyperbolic in $S_1^3$, they are 
neither complete nor embedded, but are weakly complete, because the 
metric \eqref{liftmetric} is complete, see \cite{FRUYY}.  
However, we have yet to show when the ends are elliptic or 
hyperbolic in the 
$S_1^3$ case.  This final step is taken care of by the next lemma, 
which is similar to arguments found in the appendix of \cite{F2}, 
but here we are allowing for the case of general genus $k$.  

\begin{figure}[htbp] 
\begin{center}
\begin{tabular}{ccc}
 \includegraphics[width=.30\linewidth]{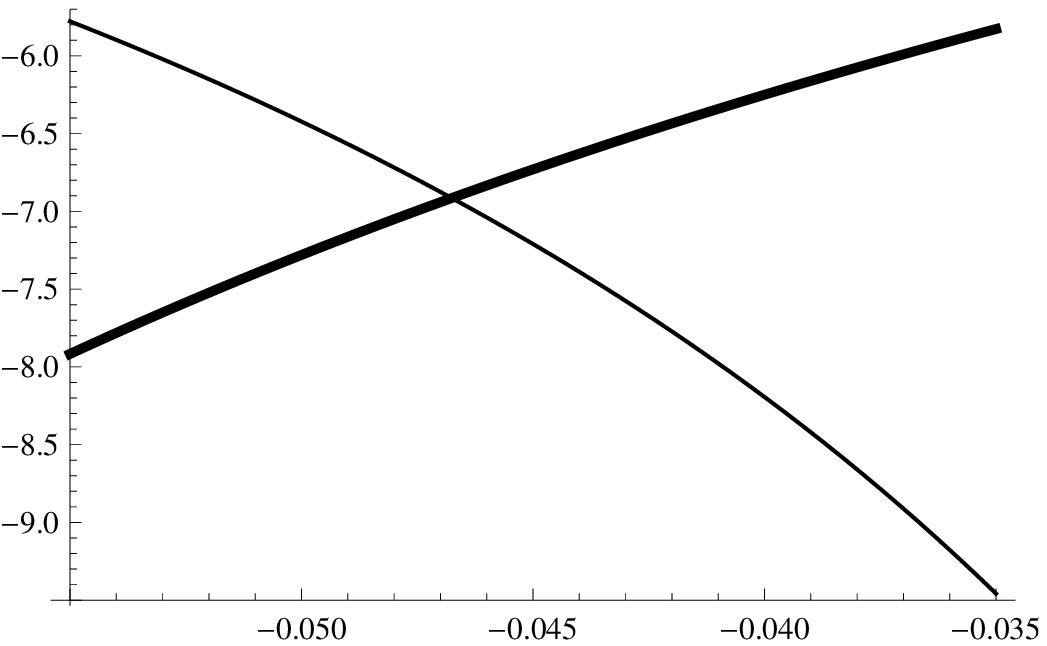} &
 \includegraphics[width=.30\linewidth]{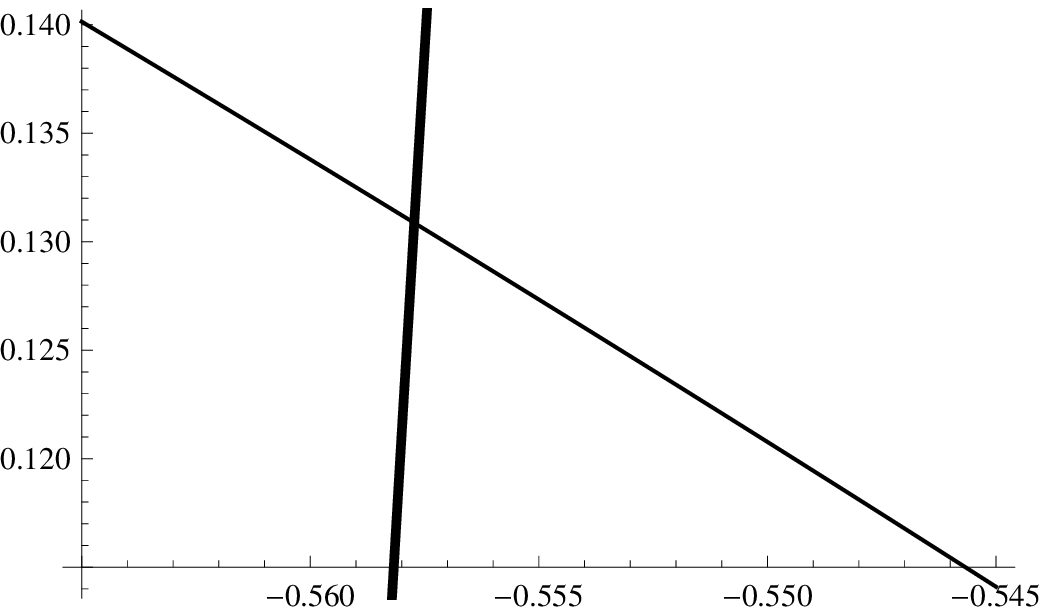} &
 \includegraphics[width=.30\linewidth]{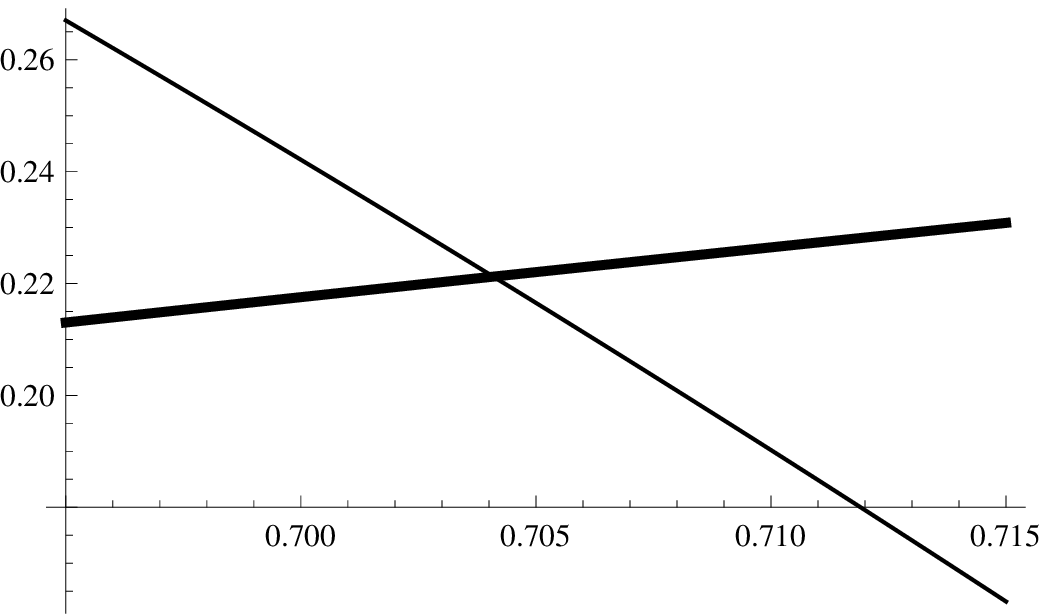} \\
 \includegraphics[width=.30\linewidth]{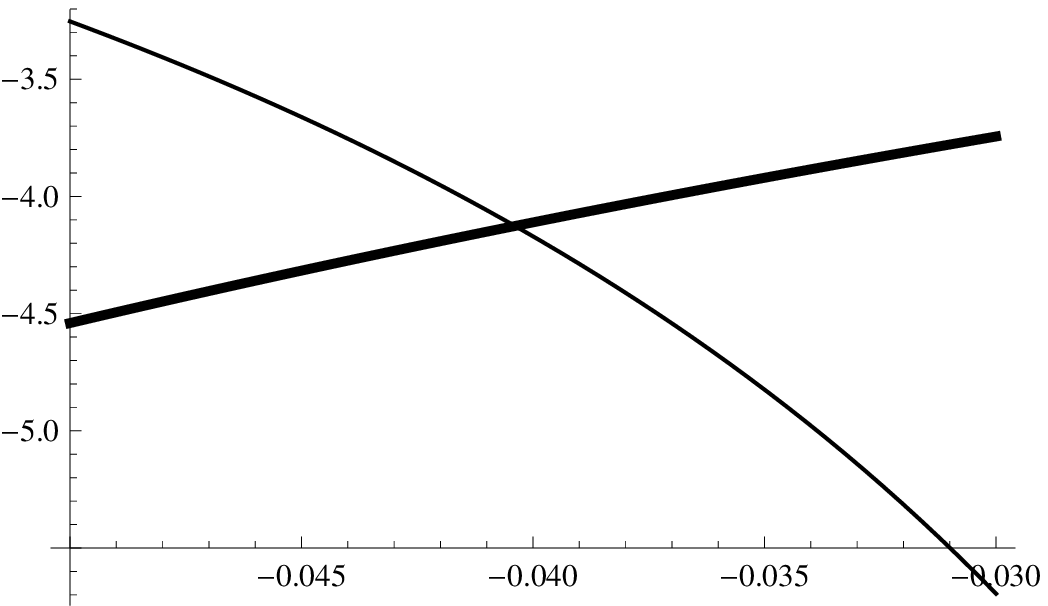} &
 \includegraphics[width=.30\linewidth]{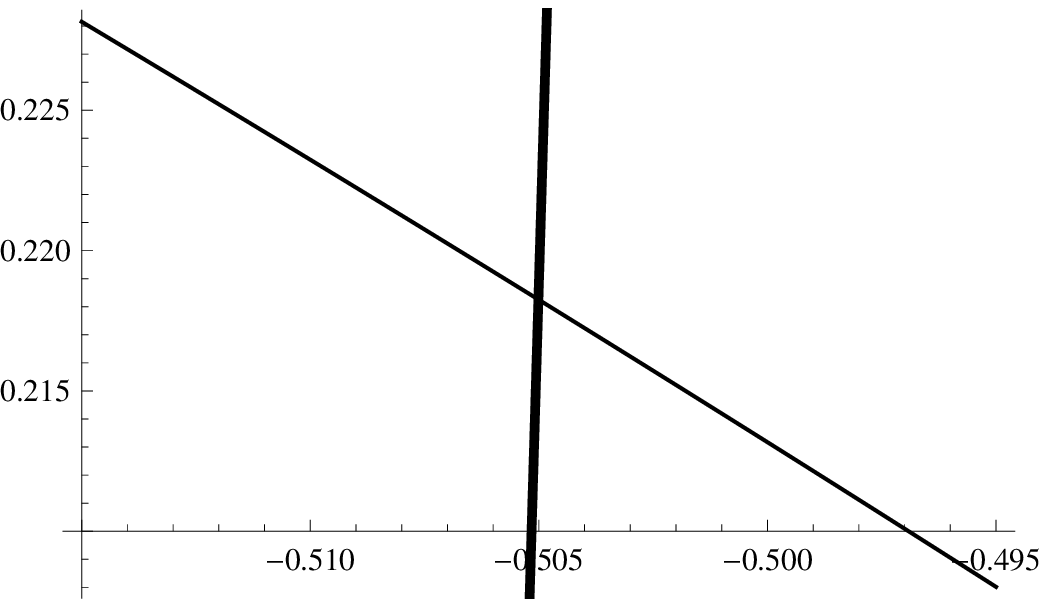} &
 \includegraphics[width=.30\linewidth]{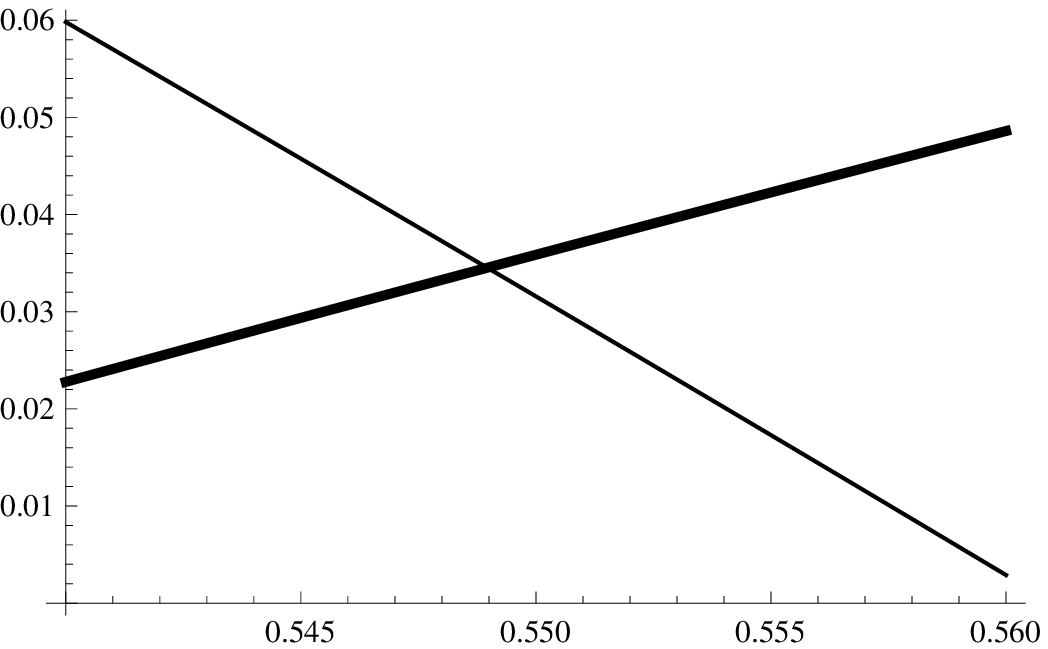} 
\end{tabular}
\end{center}
\caption{Graphs of the functions $h_1(c,2)$ (thin curve) and $h_2(c,2)$ 
(thick curve) in six cases: 
$k=1$ in $H^3$ (upper left), 
$k=1$ in $S_1^3$ with elliptic ends (upper middle), 
$k=1$ in $S_1^3$ with hyperbolic ends (upper right), 
$k=2$ in $H^3$ (lower left), 
$k=2$ in $S_1^3$ with elliptic ends (lower middle), 
$k=2$ in $S_1^3$ with hyperbolic ends (lower right).}
\label{fg:graphs}
\end{figure} 

\begin{lemma}
The ends of the CMC $1$ 
faces in $S_1^3$ in the middle column $($resp. right hand column$)$ of 
Table~$\ref{tb:desitter}$ have elliptic $($resp. hyperbolic$)$ ends.  
\end{lemma}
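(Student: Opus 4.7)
The plan is to compute the monodromy of the null holomorphic lift $F$ along a small loop encircling the end $(0,0)$, and then to read off the end type from the trace of this monodromy after conjugating into $\SU(1,1)$. By the symmetry $\kappa_2$ exchanging the two punctures, the same type will be inherited by $(\infty,\infty)$, so it suffices to analyze one end.

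The first step is to express a loop encircling $(0,0)$ as an element of $\pi_1(M,(1,w_0))$ in a form that can be evaluated via Lemmas \ref{lm:rs-first} and \ref{lem:secondlemma}. Since the local coordinate at the end satisfies $\zeta_0^{k+1}=z$, one turn around the end projects to $(k+1)$ turns around $z=0$ in the $z$-plane. Meanwhile $\gamma_2$ projects to a loop encircling both $z=0$ and $z=\lambda^{-1}$, and the lift on $\overline{M}$ of a $(k+1)$-fold loop around $z=\lambda^{-1}$ bounds a disk at the smooth ramification point $(\lambda^{-1},0)$. Using these relations together with the $(k+1)$-fold rotational symmetry $\kappa_3$ fixing the end, I would write the end loop explicitly as a word in $\gamma_2$ and its translates under powers of $\kappa_3$. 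Plugging this word into Lemmas \ref{lm:rs-first} and \ref{lem:secondlemma} produces an explicit monodromy matrix $\Psi_{\mathrm{end}}\in\SL(2,\C)$, as a product of copies of $\Phi_2$ interleaved with the block matrices representing $\kappa_3$ on $F$.

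Under the closing condition \eqref{eq:cond}, conjugating $\Psi_{\mathrm{end}}$ by the initial-condition matrix $\bigl(\begin{smallmatrix}\alpha&\beta\\ \alpha&-\beta\end{smallmatrix}\bigr)$ produces an element of $\SU(1,1)$, whose trace is automatically real. Following the classification of ends of CMC $1$ faces in $S_1^3$ given in \cite{F1} and \cite{FRUYY}, the end is elliptic precisely when the absolute value of this trace is less than $2$, parabolic when it equals $2$, and hyperbolic when it exceeds $2$ (equivalently, the eigenvalues lie on the unit circle, are $\pm 1$, or lie on the positive real axis, respectively). For each row of Table~\ref{tb:desitter} in the middle and right columns, one substitutes the numerical value of $c$ at $\lambda=2$ into the trace expression; the expected outcome is $|\mathrm{tr}|<2$ for the middle column and $|\mathrm{tr}|>2$ for the right column.

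The main obstacle is the combinatorial bookkeeping in the first step: identifying the end loop as a precise word in $\pi_1(M)$ built from $\gamma_1, \gamma_2$, the base-point shifts along $c_1, c_2$, and powers of the rotational symmetry $\kappa_3$, for arbitrary $k$. This is exactly the step that was carried out for $k=1$ in the appendix of \cite{F2}; once the corresponding word for general $k$ is written down, the remainder is a finite numerical evaluation of traces.
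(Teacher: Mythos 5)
Your strategy---compute the $\SU(1,1)$ monodromy of $F$ around the puncture $(0,0)$ and classify the end by its trace---is sound in principle, and several of your preliminary observations match the paper's proof: the reduction to a single end via $\kappa_2$, the fact that one turn around the end projects to $k+1$ turns around $z=0$, and the fact that conjugation by the initial-condition matrix does not affect the classification. However, the proposal halts exactly at the step you yourself identify as the main obstacle: the word in $\pi_1(M,(1,w_0))$ representing the end loop is never actually produced for general $k$, and the resulting trace is never evaluated. As written this is a program rather than a proof, and the program is considerably harder than necessary (for a genus-$k$ surface with two punctures the fundamental group is free of rank $2k+1$, so the bookkeeping with $\gamma_2$, its $\kappa_3$-translates, and the ramification relation at $(\lambda^{-1},0)$ is genuinely delicate; it also reduces the conclusion to a separate numerical trace computation for each of the forty entries of the table). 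A minor further slip: a hyperbolic element of $\SU(1,1)$ may have trace less than $-2$, i.e.\ negative real eigenvalues, not necessarily eigenvalues on the positive real axis---and in fact that is what occurs here.

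The paper sidesteps the global computation entirely with a local argument that you are missing. Since only the eigenvalues of the end monodromy matter and these are conjugation-invariant, one may take \emph{any} solution $F$ of \eqref{eq:bryant-dual}. Its entries satisfy second-order linear ODEs which, rewritten in the local coordinate $w$ at $(0,0)$, have a regular singular point at $w=0$; the eigenvalues of the local monodromy are then $e^{2\pi i\mu}$ for the indicial roots $\mu$. The difference of the indicial roots comes out to $\sqrt{1-4c(k+1)\lambda^{k/(k+1)}}$ for both ODEs. When the radicand is positive the roots are real, the eigenvalues lie on the unit circle, and the end is elliptic; when it is negative the roots are $\tfrac12\pm i\beta$ (resp.\ $-\tfrac12\pm i\beta$), the eigenvalues are real of modulus $\neq 1$, and the end is hyperbolic. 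This gives the closed-form criterion $\operatorname{sgn}\bigl(1-4c(k+1)\lambda^{k/(k+1)}\bigr)$, which is checked against the table in one line for all $k$ simultaneously, with no word problem in $\pi_1$ and no numerical monodromy integration. If you want to complete your own route, you must either carry out the combinatorial identification of the end loop for general $k$ or replace it by a direct numerical integration of \eqref{eq:bryant-dual} around the end; but I would recommend adopting the indicial-equation argument instead.
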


\begin{proof}
Let 
\[ F = \begin{pmatrix} A & B \\ C & D 
\end{pmatrix} \] be a solution to \eqref{eq:bryant-dual}.  
Note that, in 
order to determine the type of monodromy about an end, 
we only need to know the eigenvalues of the monodromy 
(provided those eigenvalues are not $\pm 1$), 
and this is independent of the choice of $F$.  So 
we may choose any solution to \eqref{eq:bryant-dual}.  We 
then have 
\[
X_{zz} + \left( \frac{1}{z} - \frac{w_z}{w} \right) X_z + 
\lambda^{k/(k+1)} \frac{c w_z}{z w} X = 0 \; , \;\;\; X = A,B \; , 
\]
and 
\[
Y_{zz} + \left( \frac{1}{z} + \frac{w_z}{w} \right) Y_z + 
\lambda^{k/(k+1)} \frac{c w_z}{z w} Y = 0 \; , \;\;\; Y = C,D \; . 
\]
Because of the symmetry $\kappa_2(z,w)$, 
it suffices to determine the type of 
just one end, and then the other end will automatically have the 
same type.  
So let us choose the end $(z,w)=(0,0)$.  At this end, 
$w$ is a local coordinate for the Riemann surface $\overline{M}$.  
In terms of $w$, and considering $z$ as a function of $w$, the 
equations above become 
\[
X_{ww} + o(1) \frac{X_w}{w} 
       + \lambda^{k/(k+1)} c (k+1) \left( 1 + o(1) \right) \frac{X}{w^2} = 0 
\; , \;\;\; X = A,B \; , 
\]
and 
\[
Y_{ww} + 2 \left( 1 + o(1) \right) \frac{Y_w}{w} 
       + \lambda^{k/(k+1)} c (k+1) \left( 1 + o(1) \right) \frac{Y}{w^2} = 0 
\; , \;\;\; Y = C,D \; , 
\]
where $o(1)$ denotes the Landau symbol, that is, $o(1)$ is a holomorphic 
function $\phi(w)$ around $(z,w)=(0,0)$ so that $\phi(0)=0$.  
It follows that the difference of the solutions of the 
indicial equation corresponding to the first of these two 
equations is 
\[ 
 \sqrt{1 - 4 c (k+1) \lambda^{k/(k+1)}} \; . 
\]
Likewise, the difference of solutions of the indicial equation 
for the second equation above takes the 
same value.  It follows (see the appendix of \cite{F2} for 
further details) that the end is elliptic (resp. hyperbolic) if 
\[ 
 1 - 4 c (k+1) \lambda^{k/(k+1)} 
\] 
is positive (resp. negative) which is indeed the case for the data given 
in the middle column (resp. right hand column) of Table~\ref{tb:desitter}.  
\end{proof}


\end{document}